\documentclass[11pt]{article}

\usepackage[margin=1in]{geometry}

\usepackage{amsmath,amssymb,amsfonts,amsthm}
\usepackage{mathtools}
\usepackage{bm}
\usepackage{nicefrac}

\usepackage{graphicx}
\usepackage{tikz}
\usetikzlibrary{calc,decorations.pathreplacing,arrows.meta,angles,quotes}
\usepackage{subcaption}

\usepackage{booktabs}
\usepackage{multirow}
\usepackage{threeparttable}
\usepackage{adjustbox}

\usepackage{algorithm}
\usepackage{algpseudocode}

\algnewcommand\Input{\item[\textbf{Input:}]}
\algnewcommand\Output{\item[\textbf{Output:}]}

\usepackage{enumitem}
\usepackage{float}
\usepackage{multicol}
\usepackage{pdflscape}
\usepackage{microtype}
\usepackage{xcolor}

\usepackage{natbib}

\usepackage{url}
\usepackage{hyperref}
\theoremstyle{plain}
\newtheorem{proposition}{Proposition}
\newtheorem{lemma}{Lemma}
\newtheorem{corollary}{Corollary}

\theoremstyle{definition}
\newtheorem{definition}{Definition}

\theoremstyle{remark}
\newtheorem{remark}{Remark}

\numberwithin{equation}{subsection}

\title{Change-Point Detection via Piecewise Linear Fitting Using MIP}

\author{
Apoorva Narula, Santanu S. Dey, Yao Xie \\
H. Milton Stewart School of Industrial and Systems Engineering (ISyE),\\
Georgia Institute of Technology, USA\\
\href{mailto:anarula34@gatech.edu}{anarula34@gatech.edu},
\href{mailto:santanu.dey@isye.gatech.edu}{santanu.dey@isye.gatech.edu},
\href{mailto:yao.xie@isye.gatech.edu}{yao.xie@isye.gatech.edu}
}
\date{}  
\begin{document}
\maketitle


\begin{abstract}
We present a new mixed-integer programming (MIP) approach for offline multiple change-point detection by casting the problem as a globally optimal piecewise linear (PWL) fitting problem.  Our main contribution is a family of strengthened MIP formulations whose linear programming (LP) relaxations admit integral projections onto the segment-assignment variables, which encode the segment membership of each data point. This property yields provably tighter relaxations than existing formulations for offline multiple change-point detection. We further extend the framework to multi-dimensional PWL models with shared change-points. Extensive computational experiments on benchmark real-world datasets demonstrate that the proposed formulations achieve reductions in solution times in comparison to the state-of-the-art. 
\end{abstract}

\textbf{Keywords:}Piecewise linear function fitting, Strengthened MIP formulations, Integral LP projections, Multi-dimensional fitting.

\section{Introduction}
\label{sec:problem_formulation}
Piecewise linear (PWL) fitting involves partitioning ordered observations into contiguous segments, each modeled by a linear function, with the segment boundaries inferred from the data. PWL fitting can be interpreted as a form of change-point detection, where the goal is to identify locations at which either the intercept, the slope, or both parameters abruptly change. Such structural breaks arise in applications across healthcare (\citet{tobiasz2023multivariate}), power systems (\citet{buason2025sample, ahmadi2013piecewise}), manufacturing (\citet{gao2018pvc}), public policy (\citet{wagner2002segmented, dehning2020inferring}), and other domains where detecting regime shifts is essential for interpretation and downstream decision-making. We study the offline setting, in which all observations are available and the goal is to identify change-points retrospectively. 
Mixed-integer programming (MIP) provides an exact framework for this task by explicitly modeling segment assignments and enforcing contiguity. In particular, binary segment-assignment variables determine the segment membership of each observation, enabling direct control over the number and location of change-points.

Despite this modeling flexibility, the scalability of MIP-based approaches is limited by weak linear programming (LP) relaxations, which admit fractional segment assignments and can lead to excessive branch-and-bound effort. 
To address this limitation, this paper introduces strengthened MIP formulations 
for PWL fitting. Our central contribution is a new formulation for partitioning data points into contiguous segments using an extended representation of the segment-assignment variables. We establish that the projection of the linear programming relaxation onto the space of these extended segment-assignment variables is integral, yielding a tighter relaxation. 
Comprehensive computational experiments demonstrate that the proposed formulations consistently achieve global optimality faster than existing benchmarks. 
We further demonstrate that the proposed framework extends naturally to multi-dimensional settings with shared change-points. The main contributions of this work are summarized as follows:
\begin{itemize}
  \item Strengthened mixed-integer programming formulations for offline change-point detection by piecewise linear fitting are developed, whose linear programming relaxations admit an integral projection onto the space of extended segment-assignment variables.
  
  \item A comprehensive computational study is conducted by comparing the proposed formulations with existing MIP benchmarks, demonstrating consistent runtime improvements across multiple loss functions (the $\ell_1$ and $\ell_2$ error norms) and modeling regimes, including both continuous and non-continuous segment formulations and varying numbers of segments.
  
  \item The proposed framework is extended to multi-dimensional offline multiple change-point detection.
  
\end{itemize}
Throughout this paper, we use the term change-point to refer to a structural change in the underlying signal of the data, and the term breakpoint to denote the corresponding decision variable in a PWL model at which model parameters may change.

\subsection{Literature Review}

Advances in computational power and the improved scalability of commercial mixed-integer programming (MIP) solvers have enabled the application of MIP methodologies to increasingly large-scale statistical and machine-learning problems. Motivated by these developments, the present work studies the application of MIP to change-point detection using piecewise linear (PWL) fitting as a unifying framework across the various settings of this statistical problem. 

From an optimization perspective, the key combinatorial decision concerns the optimal partitioning of data points into contiguous segments. 
One of the earliest related formulations is given by \citet{bertsimas2007}, who assign observations to affine components using binary variables. In the univariate setting, this induces a form of piecewise linear fitting; however, contiguity of segment assignments is not enforced, and observations assigned to the same affine component need not be consecutive. 
Subsequent work expanded the scope of piecewise affine fitting to higher-dimensional domains. \citet{toriello2012fitting} develop MIP formulations based on interpolation over a fixed grid, yielding globally optimal solutions relative to a prescribed discretization but requiring the grid to be specified in advance. To avoid explicit triangulation, \citet{cui2018composite} represent the fitted function as the difference of two convex piecewise-affine components, resulting in a non-convex optimization problem.  These approaches either require a predefined fixed grid or sacrifice global optimality.  Another line of research addresses change-point detection through penalization-based methods. In the single-response setting, \citet{prokhorov2025change} control the number of changes using an $\ell_0$ penalty on parameter differences, while total-variation approaches such as  \citet{harchaoui2010multiple} detect mean changes using $\ell_1$ penalization, and \citet{zhang2007modified} use a modified Bayesian information criterion (BIC)  to select the number of mean shifts.  Although computationally efficient, these approaches cannot model continuous piecewise linear fits or be extended to enforce shared segmentation across multiple dimensions of a multivariate time series. 

In the multivariate setting, most existing works, such as \citet{xie2013sequential} and \citet{wang2018high}, primarily focus on mean shifts and do not consider piecewise linear signals. \citet{cao2018multi} consider piecewise linear signals; however, they fix the pre-change slope to be zero. Our model relaxes this assumption. 

In the setting with a one-dimensional domain and co-domain, \citet{rebennack2020}, \citet{kong2020}, and \citet{goldberg2021} develop MIP-based formulations for contiguous segmentation with a fixed number of segments. These encode segmentation decisions using binary variables and model continuity of the fitted function.  A comparative study by \citet{warwicker2022comparison} finds the formulation of \citet{rebennack2020} to be the most computationally effective among existing MIP-based models. Some extensions to these models focus on clustered regression settings (\citet{warwicker2023unified}) and minimizing the number of segments in the continuous piecewise linear fit subject to an approximation error tolerance (\citet{ploussard2024piecewise}).

Building on this line of work, this paper strengthens the segment-assignment formulation for binary-variable models of piecewise linear fitting with a fixed number of contiguous segments. We introduce an extended formulation whose LP relaxation admits an integral projection onto the segment-assignment space, and extend this framework to the multi-dimensional response setting. The remainder of the paper is organized as follows. Section~\ref{sec:formulation_1d} introduces the MIP formulations for one-dimensional piecewise-linear fitting, and extends this to the multi-dimensional change-point detection setting. Section~\ref{sec:Comparison_of_LP_relaxations}  compares the corresponding linear programming relaxations.  Section~\ref{sec:experiments} describes the computational experiments performed to empirically compare the runtime performance of the proposed formulations. Section~\ref{sec:results} presents the results obtained from these experiments, and Section~\ref{sec:conclusion} offers concluding remarks and outlines directions for future research.

\section{Multiple Change-Point Detection for Univariate Sequences}
\label{sec:formulation_1d}

Given a dataset $\{(x_t, y_t)\}_{t=1}^T$ of $T$ observations, where $y_t \in \mathbb{R}$ and $x_t \in \mathbb{R}^d$, we aim to fit a piecewise linear function $f : \mathbb{R}^d \to \mathbb{R}$ with $K$ segments by minimizing a chosen error metric between the fitted and observed responses. Here, $x_t$ denotes the predictor variables. In simple univariate ($d=1$) time-series settings, $x_t$ corresponds to the time index and thus takes non-negative integer values, as in uniformly sampled data; this is the setting considered for most of this work. However, the proposed methodology can be naturally extended to settings in which $x_t$ does not lie on a uniform grid, as in non-uniformly sampled time-series.  We denote by $\mathcal{F}_K$ the family of piecewise linear functions with $K$ segments. We restrict attention to the one-dimensional domain setting ($d=1$), so that $x_t\in\mathbb{R}$ for all $t$. Any function $f\in\mathcal{F}_K$ is parameterized by
$\boldsymbol{\theta}=(\{m_i,c_i\}_{i=1}^K,r_0,\ldots,r_K)$, where
$r_0\leq r_1 \leq \cdots \leq r_K$ are the breakpoints, and satisfies
$f(x\mid\boldsymbol{\theta})=m_i x+c_i$ whenever
$x\in(r_{i-1},r_i]$ for $i=1,\ldots,K$, with the convention that the first interval is $[r_0,r_1]$. We assume that $\boldsymbol{\theta}$ belongs to a parameter space $\boldsymbol{\Theta}$, which imposes lower and upper bounds on the slopes, intercepts, and breakpoint locations based on the observed data. The parameter space $\boldsymbol{\Theta}$ considered in this work is similar to that used by \citet{rebennack2020}, further details on the construction of $\boldsymbol{\Theta}$ are provided in Appendix~\ref{sec:bigM}. Given the function class $\mathcal{F}_K$, the fitting problem is to solve $\min_{f \in \mathcal{F}_K}\sum_{t=1}^T |y_t-f(x_t)|^q$, where $q\in\{1,2\}$ denotes the choice of $\ell_1$ or $\ell_2$ loss. Continuity across segment boundaries may optionally be enforced through the constraints $m_i r_i + c_i = m_{i+1} r_i + c_{i+1}$ for all $i=1,\ldots,K-1$.

By convention, we set $r_0 = x_1$ and $r_K = x_T$. Thus, the representation of a $K$ segment PWL fitting requires $K+1$ breakpoints, of which two (namely, $r_0$ and $r_K$) are fixed. In the MIP-based change-point detection literature (e.g., \citet{goldberg2021, rebennack2020}), continuity at each change-point is typically enforced, yet the broader statistical literature does not generally impose this, as a change may manifest as a jump, a slope change, or both, with continuity warranted in some applications due to physical or engineering constraints but not others.
Motivated by these considerations, both continuous and discontinuous PWL models are studied in the univariate sequence setting. This choice enables a fair comparison between the proposed MIP formulations and existing benchmark methods, which consider this setting. For the multi-dimensional change-point detection setting, continuity is not imposed, consistent with the literature in this area. In the remainder of this work, we refer to the MIP formulations proposed by \citet{goldberg2021} and \citet{rebennack2020} as the Basic and Alternative formulations, respectively. We introduce modular extensions to the segment-assignment components of each of these models, and accordingly refer to our proposed methods as the Extended Basic and Extended Alternative formulations.

\subsection{Basic MIP Formulation}
\label{subsec:Basic}

The formulation of \citet{goldberg2021} relies on binary segment-assignment
variables to encode the partition of the data into linear segments. The complete Basic formulation minimizes the total fitting error under the chosen $\ell_q$ norm, subject to the decision variables and constraints listed below.

\paragraph{Decision variables and parameters}

\begin{itemize}[leftmargin=1.5em]
\item Segment-assignment variables:
For each segment $j\in\{1,\ldots,K\}$ and observation $t\in\{1,\ldots,T\}$, let $\delta_{j,t}\in\{0,1\}$ denote the segment-assignment variable, with $\delta_{j,t}=1$ if observation $t$ belongs to segment $j$ and $\delta_{j,t}=0$ otherwise. These variables define a partition of the data into $K$ contiguous segments.

\item Segment parameters: For each segment $j\in\{1,\ldots,K\}$, let $m_j,c_j\in\mathbb{R}$ denote the slope and intercept of segment $j$.

\item Breakpoint variables: Let $r_j\in\mathbb{R}$, $j=0,\ldots,K$, denote the breakpoint locations, satisfying $r_j\le r_{j+1}$ for all $j$, with $r_0=x_1$ and $r_K=x_T$.

\item Fitted values:
For each observation $t\in\{1,\ldots,T\}$, let $\hat{y}_t\in\mathbb{R}$ denote the fitted value.


\item Big-\(M\) parameters: 
The formulation uses nonnegative constants $M^A_{1,t}$, $M^A_{2,t}$, and $M^A_{3,t}$, for $t=1,\ldots,T$, to deactivate constraints when
$\delta_{j,t} = 0$.
Specifically:
\begin{itemize}[leftmargin=1.5em]
\item $M^A_{1,t}$ relaxes the value-assignment constraints ~\eqref{eq:A2_upper} - ~\eqref{eq:A2_lower}, allowing the
linear function of segment $j$ to differ from $\hat{y}_t$ when observation
$t$ is not assigned to segment $j$.
\item $M^A_{2,t}$ and $M^A_{3,t}$ relax the breakpoint
localization constraints ~\eqref{eq:A3_right} - ~\eqref{eq:A3_left}, ensuring that the bounds imposed
by breakpoints $r_{j-1}$ and $r_j$ are enforced only for the active
segment.
\end{itemize}
The derivation of these constants is discussed in
Appendix~\ref{sec:bigM}.


\end{itemize}



\paragraph{Objective function}
The objective is to minimize the $\ell_q$ fitting error, i.e.,
$\min \sum_{t=1}^T |y_t-\hat{y}_t|^q$, where $q\in\{1,2\}$.

\paragraph{Constraints}

\begin{enumerate}[label=\textbf{(A\arabic*)}, wide]

\item Basic segment-assignment constraints: Each observation must be assigned to exactly one segment.
\begin{equation}
\label{eq:A1_segment_assignment}
    \sum_{j=1}^K \delta_{j,t} = 1,
    \qquad \forall t = 1,\ldots,T.
\end{equation}

\item Value assignment constraints: The fitted value $\hat{y}_t$
must coincide with the linear function associated with segment $j$
whenever $\delta_{j,t} = 1$. Hence, for each segment $j \in \{1,\ldots,K\}$ and observation
$t \in \{1,\ldots,T\}$:
\begin{align}
\label{eq:A2_upper}
    m_j x_t + c_j &\le \hat{y}_t + M^A_{1,t}(1 - \delta_{j,t}),  \\
\label{eq:A2_lower}
    m_j x_t + c_j &\ge \hat{y}_t - M^A_{1,t}(1 - \delta_{j,t}).
\end{align}

\item Breakpoint localization constraints: The breakpoints $r_{j-1}$ and $r_{j}$ must bound all observations assigned to
segment $j$. Hence, for each segment $j \in \{1,\ldots,K\}$ and observation
$t \in \{1,\ldots,T\}$:
\begin{align}
\label{eq:A3_right}
    x_t &\le r_{j} + M^A_{2,t}(1 - \delta_{j,t}), \\
\label{eq:A3_left}
    x_t &\ge r_{j - 1} - M^A_{3,t}(1 - \delta_{j,t}),
\end{align}
with fixed endpoint conditions
\begin{equation}
\label{eq:A3_endpoints}
    r_0 = x_1, 
    \qquad 
    r_{K} = x_T.
\end{equation}
The breakpoints are also required to be non-decreasing in the segment index:
\begin{equation}
\label{eq:A3_breakpoints_ordering}
    r_j \le r_{j+1}, \qquad \forall j = 0,\ldots,K - 1.
\end{equation}
The presence of these breakpoints, which provide upper and lower bounds on all $x_t$ assigned to a segment, together with their nondecreasing order, leads to the assignment of points to segments in a contiguous manner.

\item Basic continuity constraints (optional): To enforce continuity of the fitted function across adjacent segments, the following bilinear constraints are imposed.
\begin{equation}
\label{eq:A4_continuity}
    m_{j} r_j + c_{j}
    =
    m_{j+1} r_{j} + c_{j+1},
    \qquad \forall j = 1,\ldots,K - 1.
\end{equation}

\item Variable domains and parameter bounds: The decision variables and parameters are restricted to the parameter
space $\boldsymbol{\Theta}$, which imposes bounds based on the observed
data.
\begin{align}
    m_j &\in [L^m, U^m] \subset \mathbb{R}, 
    && \forall j \in \{1, \dots, K\}, \notag \\
    c_j &\in [L^c, U^c] \subset \mathbb{R}, 
    && \forall j \in \{1, \dots, K\}, \notag \\
    \hat{y}_t &\in \mathbb{R}, 
    && \forall t \in \{1, \dots, T\}, \notag \\
    \delta_{j,t} &\in \{0,1\}, 
    && \forall j \in \{1, \dots, K\},\; \forall t \in \{1, \dots, T\}.
\end{align}
The construction of $\boldsymbol{\Theta}$ and the associated big-\(M\)
constants $M^A_{1,t}$, $M^A_{2,t}$, and $M^A_{3,t}$ is discussed in
detail in Appendix~\ref{sec:bigM}, which uses the same derivation as \citet{rebennack2020}.

\end{enumerate}

\subsection{Alternative MIP Formulation}
\label{subsec:Alternate}

We next describe the Alternative formulation, which is presented in \citet{rebennack2020}. 
This formulation replaces the bilinear continuity constraints of the Basic model with linear constraints that enforce continuity indirectly, without introducing explicit breakpoint decision variables. 
Continuity is instead modeled by introducing additional variables that capture changes in slope between successive linear segments.
Consequently, two constraint blocks differ from the Basic formulation, namely the segment-assignment constraints and the continuity constraints. We therefore refer to these as the Alternative segment-assignment constraints and the Alternative continuity constraints. 


\paragraph{Additional decision variables and parameters:}
In addition to the decision variables introduced in the Basic formulation, the Alternative formulation uses the following variables and parameters.

\begin{itemize}[leftmargin=1.5em]


\item Slope-direction indicators:
For each segment $j\in\{1,\ldots,K-1\}$, let $\gamma_j\in\{0,1\}$ indicate the direction of slope change, $\gamma_j=1$ if $m_{j+1}\le m_j$, and $\gamma_j=0$ if $m_{j+1}\ge m_j$.


\item Continuity-activation variables:
For each segment $j\in\{1,\ldots,K-1\}$ and observation $t\in\{1,\ldots,T-1\}$, let $\delta^+_{j,t},\delta^-_{j,t}\in[0,1]$ denote activation variables that enforce the appropriate linearized continuity constraints based on the slope direction indicated by $\gamma_j$.


\item Big-\(M\) parameters:
The formulation uses nonnegative constants $M^B_{4,t}$, for $t=1,\ldots,T$, to deactivate the linearized continuity constraints ~\eqref{eq:B2_1} - \eqref{eq:B2_4}. Their derivation is given in Appendix~\ref{sec:bigM} and follows \citet{rebennack2020}.

\end{itemize}

\paragraph{Constraints.}

\begin{enumerate}[label=\textbf{(B\arabic*)}, wide]

\item Alternative segment-assignment constraints:
\begin{align}
\label{eq:B1_assign}
    \sum_{j=1}^K \delta_{j,t} &= 1,
    && \forall t = 1,\ldots,T, \\[4pt]
\label{eq:B1_contiguity}
    \delta_{j+1,t+1} &\le \delta_{j,t} + \delta_{j+1,t},
    && \forall j = 1,\ldots,K-1,\; \forall t = 1,\ldots,T-1, \\[4pt]
\label{eq:B1_first}
    \delta_{1,t+1} &\le \delta_{1,t},
    && \forall t = 1,\ldots,T-1, \\[4pt]
\label{eq:B1_last}
    \delta_{K,t+1} &\ge \delta_{K,t},
    && \forall t = 1,\ldots,T-1.
\end{align}

\item Alternative continuity constraints:  
For all $j = 1,\ldots,K-1$ and $t = 1,\ldots,T-1$, the following inequalities
enforce continuity of the fitted function in a linearized manner:
\begin{align}
\label{eq:B2_1}
    c_{j+1} - c_j 
        &\ge x_t (m_j - m_{j+1}) - M^B_{4,t}(1 - \delta^+_{j,t}), \\
\label{eq:B2_2}
    c_{j+1} - c_j 
        &\le x_{t+1}(m_j - m_{j+1}) + M^B_{4,t+1}(1 - \delta^+_{j,t}), \\
\label{eq:B2_3}
    c_{j+1} - c_j 
        &\le x_t (m_j - m_{j+1}) + M^B_{4,t}(1 - \delta^-_{j,t}), \\
\label{eq:B2_4}
    c_{j+1} - c_j 
        &\ge x_{t+1}(m_j - m_{j+1}) - M^B_{4,t+1}(1 - \delta^-_{j,t}),
\end{align}
where the activation variables are governed by
\begin{align}
\label{eq:B2_5}
    \delta_{j,t} + \delta_{j+1,t+1} + \gamma_j - 2 
        &\le \delta^+_{j,t},  \\
\label{eq:B2_6}
    \delta_{j,t} + \delta_{j+1,t+1} + (1 - \gamma_j) - 2 
        &\le \delta^-_{j,t}.
\end{align}

\end{enumerate}

\begin{remark}[Alternative Method of Contiguous Segment Assignment]\label{rem:AMCSA}
Unlike the Basic formulation that enforces contiguity via explicit breakpoint variables, contiguity is imposed here directly through the assignment variables. For $K = 2$, constraints \eqref{eq:B1_assign}, \eqref{eq:B1_first}, and \eqref{eq:B1_last} ensure that segment assignments are contiguous. For $K > 2$, suppose segment $j > 1$ is assigned non-contiguously, that is, it appears in two disjoint time intervals separated by a gap. Then, by constraint \eqref{eq:B1_contiguity}, segment $j - 1$ must be active immediately before each reappearance of segment $j$, while it cannot be active during periods when segment $j$ is assigned (due to \eqref{eq:B1_assign}). This forces segment $j - 1$ to also be non-contiguous. Repeating this argument propagates non-contiguity backwards to segment $1$. Since segment $1$ is contiguous by \eqref{eq:B1_first}, this yields a contradiction. Hence, all segment assignments must be contiguous.

\end{remark}

\begin{remark}[Alternative Method of Modeling Continuity]\label{rem:AMMC}

Suppose a breakpoint $r$ occurs between observations $x_t$, assigned to segment $j$, and
$x_{t+1}$, assigned to segment $j+1$. Continuity at the breakpoint requires that the two
linear segments intersect at $r$, i.e.,
\[
m_j r + c_j = m_{j+1} r + c_{j+1}.
\]
Solving for $r$ yields
\[
r = \frac{c_{j+1} - c_j}{m_j - m_{j+1}}.
\]
Since the breakpoint lies between $x_t$ and $x_{t+1}$, it must satisfy
\[
x_t \;\le\; \frac{c_{j+1} - c_j}{m_j - m_{j+1}} \;\le\; x_{t+1}.
\]

If $m_j - m_{j+1} \geq 0$, this condition can be equivalently written as
\[
x_t (m_j - m_{j+1}) \;\le\; c_{j+1} - c_j \;\le\; x_{t+1} (m_j - m_{j+1}),
\]
which is modeled by constraints~\eqref{eq:B2_1} - \eqref{eq:B2_2}.  
If instead $m_j - m_{j+1} < 0$, the inequalities reverse, yielding
\[
x_t (m_j - m_{j+1}) \;\ge\; c_{j+1} - c_j \;\ge\; x_{t+1} (m_j - m_{j+1}),
\]
as represented by constraints~\eqref{eq:B2_3} - \eqref{eq:B2_4}. The activation of these constraints is governed by the segment-assignment variables. When a breakpoint occurs between observations $t$ and $t+1$, the assignments satisfy
$\delta_{j,t} = 1$ and $\delta_{j+1,t+1} = 1$. The direction of the slope change between
segments $j$ and $j+1$, encoded by $\gamma_j$, determines which pair of linearized
continuity constraints is enforced. If $m_{j+1} \le m_j$, then $\gamma_j = 1$, and
constraint~\eqref{eq:B2_5} forces $\delta^+_{j,t} = 1$. In this case, the big-$M$ terms in
constraints~\eqref{eq:B2_1} - \eqref{eq:B2_2} vanish, enforcing continuity at the breakpoint.
Conversely, if $m_{j+1} \ge m_j$, then $\gamma_j = 0$, and constraint~\eqref{eq:B2_6} forces
$\delta^-_{j,t} = 1$, activating constraints~\eqref{eq:B2_3} - \eqref{eq:B2_4} instead. Thus, exactly one pair of continuity constraints is activated at each breakpoint,
depending on the relative ordering of the slopes of the adjacent segments.
\end{remark}

\subsection{Proposed Extended Formulation}
\label{sec:formulation_1d_modification}

We now introduce the proposed formulation for enforcing contiguous segment assignment that
replaces the segment-assignment constraint blocks of both the Basic and Alternative
formulations. The key idea is to construct segment membership from a family of
nested, monotonically nonincreasing binary vectors
\[
X_{j, \cdot} = (X_{j,1}, \ldots, X_{j,T}), \qquad j = 1,\ldots,K-1.
\]
For each $j \in \{1,\ldots,K-1\}$ and $t \in \{1,\ldots,T\}$, 
$X_{j,t} \in \{0,1\}$
is a binary indicator that equals 1 if $t$ is assigned to one of the first $j$ segments, and $X_{j,t} = 0$ if it is assigned to segment $j + 1$ or any later segment.
The transition point at
which $X_{j,t}$ switches from $1$ to $0$ therefore identifies the starting index
of segment $j+1$. The collection of vectors $\{X_{j, \cdot}\}_{j=1}^{K-1}$ is required to be nested and
monotonically nonincreasing, ensuring that each segment boundary is
activated at most once and that segment boundaries occur in increasing order. The segment-assignment variables $\delta_{j,t}$ are then derived from differences of the nested vectors, yielding contiguous and ordered segment
blocks. 

\begin{enumerate}[label=\textbf{(C\arabic*)}, wide]

\item Extended segment-assignment constraints:  
The segment-assignment variables $\delta_{j,t}$ are defined in terms of the
nested binaries $X_{j,t}$ as stated below.
\begin{equation}
\label{eq:ext_segment_assignment_1}
\delta_{j,t} =
\begin{cases}
    X_{1,t}, & j = 1, \\[4pt]
    X_{j,t} - X_{j-1,t}, & j = 2,\ldots,K-1, \\[4pt]
    1 - X_{K-1,t}, & j = K,
\end{cases}
\qquad
\forall j = 1,\ldots,K,\;\forall t = 1,\ldots,T.
\end{equation}
To ensure consistent ordering across both segment and time indices, the nested
vectors satisfy
\begin{align}
\label{eq:ext_segment_assignment_2}
    X_{j,t} &\ge X_{j,t+1},
    && \forall j = 1,\ldots,K-1,\;\forall t = 1,\ldots,T-1, \\[4pt]
\label{eq:ext_segment_assignment_3}
    X_{j+1,t} &\ge X_{j,t},
    && \forall j = 1,\ldots,K-2,\;\forall t = 1,\ldots,T.
\end{align}
Together, \eqref{eq:ext_segment_assignment_1} - \eqref{eq:ext_segment_assignment_3}
guarantee that each segment is associated with a single contiguous block of
data points and that segments activate in increasing order from $1$ to $K$.
\end{enumerate}
Replacing the basic segment-assignment block (A1) (constraint~\eqref{eq:A1_segment_assignment}) with the extended block (C1) (constraints~\eqref{eq:ext_segment_assignment_1} - \eqref{eq:ext_segment_assignment_3}) yields the Extended Basic formulation. Similarly, replacing the Alternative segment-assignment block (B1) (constraints~\eqref{eq:B1_assign} - \eqref{eq:B1_last}) with block (C1) yields the Extended Alternative formulation. All remaining constraints of the respective
formulations remain unchanged.

\begin{remark}[Equivalence of Basic and Extended Basic]
\label{rem:EQF}
\label{par:feasible_region_comparison}
The Basic and Extended Basic formulations define identical feasible regions.
As stated earlier, the two models differ only in their treatment of segment-assignment constraints. The Basic formulation uses the assignment constraint (A1) (constraint~\eqref{eq:A1_segment_assignment}), together with the
breakpoint-localization and ordering constraints \eqref{eq:A3_right} - \eqref{eq:A3_breakpoints_ordering}, to enforce segment
contiguity, whereas the Extended Basic formulation replaces (A1) with the
extended segment-assignment constraints (C1), constraints~\eqref{eq:ext_segment_assignment_1} - \eqref{eq:ext_segment_assignment_3}. We first show that every feasible solution to the Extended Basic formulation
is feasible for the Basic formulation.
Summing constraint~\eqref{eq:ext_segment_assignment_1} over the segment index \(j\)
recovers constraint~\eqref{eq:A1_segment_assignment}.
Since all remaining constraints are identical in the two formulations,
any solution feasible for the Extended Basic formulation satisfies all constraints
of the Basic formulation. Conversely, we show that every feasible solution to the Basic formulation is feasible for the Extended Basic formulation.
Let $(\delta, m, c, r, \hat{y})$ be feasible for the Basic formulation. Define
$$X_{j,t} := \sum_{i = 1}^j\delta_{i,t} \ \forall j = 1, \dots, K-1, t = 1, \dots, T.$$
By constraints \eqref{eq:A3_right} - \eqref{eq:A3_breakpoints_ordering}, the assignment $\delta$ induces a contiguous partition of ${1,\dots,T}$ into $K$ segments; using this, it is straightforward to verify that the pair $(\delta, X)$ satisfies constraints~\eqref{eq:ext_segment_assignment_1} - \eqref{eq:ext_segment_assignment_3}.
\end{remark}

\begin{remark}[Alternative vs.\ Extended Alternative]
\label{rem:NEQF}
The Alternative and Extended Alternative formulations differ only in their segment-assignment constraints. The Alternative formulation enforces contiguity through the block (B1) (constraints~\eqref{eq:B1_assign} - \eqref{eq:B1_last}), whereas replacing this block with the extended segment-assignment constraints (C1) yields the Extended Alternative formulation. Every feasible solution to the Alternative formulation is also feasible for the
Extended Alternative formulation since the Alternative formulation admits only contiguous partitions of the data
indices, and every such contiguous partition can be represented using the extended
segment-assignment variables satisfying constraints (C1). Thus, the feasible region of the Alternative formulation is contained within that of the Extended Alternative formulation. However, the reverse inclusion does not hold.
There exist segment assignments that are feasible under the extended segment-assignment constraints but infeasible under the Alternative formulation. To illustrate this, consider the following assignment of the segment variables:
\begin{align}
\delta_{1,\cdot} &= [1,1,0,0,0,0,0,0,0], \notag\\
\delta_{2,\cdot} &= [0,0,1,1,1,0,0,0,0], \notag\\
\delta_{3,\cdot} &= [0,0,0,0,0,0,0,0,0], \notag\\
\delta_{4,\cdot} &= [0,0,0,0,0,1,1,0,0], \notag\\
\delta_{5,\cdot} &= [0,0,0,0,0,0,0,1,1]. \notag
\end{align}
This assignment satisfies the extended segment-assignment constraints
\eqref{eq:ext_segment_assignment_1} - \eqref{eq:ext_segment_assignment_3}.
However, it violates the Alternative contiguity constraint~\eqref{eq:B1_contiguity}. Specifically, setting $(j,t) = (3,5)$ in~\eqref{eq:B1_contiguity} yields
\[
\delta_{4,6} \le \delta_{3,5} + \delta_{4,5} = 0,
\]
whereas $\delta_{4,6} = 1$, resulting in a violation. The violation arises because segment~$3$ is unused, that is, no data points are assigned to it, despite the model allowing up to five segments. Such assignments are permitted by the Extended Alternative formulation, which allows unused segments, but are excluded by the Alternative formulation, whose contiguity constraints implicitly enforce the use of all segments. Consequently, the feasible region of the Extended Alternative formulation strictly contains that of the Alternative formulation.
\end{remark}


\begin{remark}[Same optimal solutions in $m, c, \hat{y}$ space]
\label{rem:SGO}
As discussed above, the Basic and Extended formulations (both Basic and Alternative variants) define equivalent feasible regions, whereas the Alternative formulation's feasible region is a strict subset as it does not permit unused segments. However, consider an optimal solution of any of the Basic, Extended Basic, or Extended Alternative formulations that uses fewer than $K$ segments. Such a solution can be represented within the feasible region of the Alternative formulation by splitting one of the segments into two consecutive segments and assigning them identical slope and intercept values. This is feasible for the Alternative formulation, preserves the objective value, and is therefore an optimal solution for the Alternative formulation. Therefore, all four formulations admit optimal solutions that coincide in the $m, c, \hat{y}$-space.
\end{remark}

\begin{remark}[Unused Segments and $\ell_0$-Regularization]
\label{rem:L0}
Based on Remark~\ref{rem:NEQF}, the Extended Alternative formulation admits feasible piecewise linear fits in which some of the available segments remain unused. By Remark~\ref{rem:EQF}, the Basic and Extended Basic formulations are equivalent, and therefore the Basic formulation also permits such solutions. Consequently, all three formulations, the Basic, Extended Basic, and Extended Alternative formulations can be employed to perform piecewise linear fitting while allowing the number of active segments to be selected by the model through $\ell_0$-type regularization that penalizes the number of segments actually used in the fit, thereby preventing overfitting. In this setting, the parameter $K$ serves as an upper bound on the number of segments, rather than fixing it in advance.
This flexibility facilitates model selection within this mixed-integer programming framework. An explicit MIP formulation for $\ell_0$-regularized piecewise linear fitting is
presented in Appendix~\ref{sec:L0_regularized}.
\end{remark}

\begin{remark}[Multi-dimensional Change-Point Detection]
\label{rem:MultiD}

We adapt the Basic, Alternative, and Extended MIP formulations for
multi-dimensional change-point detection, in which all response components share
a common set of breakpoints and are fit with separate  piecewise-linear functions
over these segments for each dimension. The binary segmentation variables are shared across signals to impose the same segmentation. The slope and intercept variables along with their bounds are applied dimension-wise.  As is
consistent with the multi-dimensional change-point detection literature (\citet{xie2013sequential, wang2018high}), we do not enforce
continuity constraints in this setting. We note that, in the discontinuous PWL setting, the problem can be solved using
dynamic programming (\cite{bai2003computation}). However, since the focus of this paper is on improving the segment-assignment formulation and comparing the resulting MIP formulations, we do not conduct separate experiments using other algorithms. The derivation of the Big-$M$ constants and parameter bounds is performed separately for each dimension, using the same methodology as in the univariate setting as discussed in Appendix~\ref{sec:bigM}.
\begin{equation*}
m_j^d\in[L_d^m,U_d^m],\qquad c_j^d\in[L_d^c,U_d^c],
\qquad j=1,\ldots,K,\;\; d=1,\ldots,D.
\label{eq:multi_bounds}
\end{equation*}
\end{remark}

\section{ Comparison of Linear Programming Relaxations in Delta-Space}
\label{sec:Comparison_of_LP_relaxations}

We study the linear programming relaxations (LPR) of the various change-point detection MIP formulations introduced in Section~\ref{sec:formulation_1d}.  As is standard, we would like to show that the LPR of the proposed formulations is smaller than those for the existing formulations, since this implies possibly stronger lower bounds from the LPR. See discussion in Chapter I.1 in~\cite{nemhauser1988integer}, Chapter 2 in~\cite{conforti2014integer}, and~\cite{shah2026non}. Since all formulations differ in the segment-assignment variables and the associated constraints used, we will study LPRs in the $\delta$-space.

Notation: we distinguish between binary decision variables and their LP-relaxed counterparts using a ``$\sim$''. Specifically, in the LP relaxation, variables $X$ and $\delta$ are denoted as $\widetilde X$ and $\widetilde\delta$, respectively. 

\subsection{Integrality of the Extended (Basic and Alternative) Formulation}
Since both Extended formulations use the same extended variable segment-assignment constraints, it is sufficient to study constraints (C1) (constraints~\eqref{eq:ext_segment_assignment_1} - \eqref{eq:ext_segment_assignment_3}) and their projection onto the $\widetilde\delta$-variable space.  We recall that a polytope is integral if all its vertices are integer-valued, and that the LP relaxation is tightest when it is integral.

\begin{proposition}[Integrality of the (C1) Polyhedron]
\label{prop:integrality-C1-text}
Let \(EB\) denote the polyhedron in the $(\widetilde{X}, \widetilde{\delta})$-space defined by the constraint set (C1), that is,  constraints~\eqref{eq:ext_segment_assignment_1} - \eqref{eq:ext_segment_assignment_3}, together with 
bounds,
\[
\widetilde{X}_{j,t} \in [0,1], \qquad \widetilde{\delta}_{j,t} \in [0,1],
\quad \forall\, j, t.
\]
Then the polyhedron \(EB\) is integral.
\end{proposition}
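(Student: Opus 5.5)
Since the $\widetilde\delta$-variables are fixed linear functions of $\widetilde X$ through \eqref{eq:ext_segment_assignment_1}, the polyhedron $EB$ is the image of a polyhedron $P$ in $\widetilde X$-space under an injective affine map $\widetilde X\mapsto(\widetilde X,\widetilde\delta(\widetilde X))$. Here $P$ is defined by \eqref{eq:ext_segment_assignment_2}, \eqref{eq:ext_segment_assignment_3}, the bounds $0\le \widetilde X_{j,t}\le 1$, and the bounds $0\le\widetilde\delta_{j,t}\le1$ rewritten in $\widetilde X$. An injective affine map sends vertices to vertices. It also has integer coefficients, so it sends integer points to integer points. It therefore suffices to prove that $P$ is integral.

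To do so, I will first list the inequalities of $P$.
\begin{itemize}
\item $\widetilde\delta_{1,t}\in[0,1]$ gives $0\le\widetilde X_{1,t}\le1$.
\item $\widetilde\delta_{K,t}\in[0,1]$ gives $0\le\widetilde X_{K-1,t}\le 1$.
\item For $2\le j\le K-1$, $\widetilde\delta_{j,t}\in[0,1]$ gives $0\le \widetilde X_{j,t}-\widetilde X_{j-1,t}\le 1$.
\end{itemize}
The lower bound in the last item repeats \eqref{eq:ext_segment_assignment_3}, and the upper bound is implied by the variable bounds. Every constraint is therefore either a bound $0\le \widetilde X_{j,t}\le 1$ or a difference inequality $\widetilde X_{a}-\widetilde X_{b}\ge 0$ between two entries of the $(K-1)\times T$ grid. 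In the difference inequalities, $\widetilde X_{j,t}$ dominates both its right neighbour $\widetilde X_{j,t+1}$ and its upper neighbour $\widetilde X_{j+1,t}$.

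The key step is that such a system has a totally unimodular constraint matrix. Each row of the difference block has exactly one $+1$ and one $-1$, so that block is the transpose of a directed-graph incidence matrix and hence totally unimodular. Appending identity rows for the bounds preserves total unimodularity. By the Hoffman--Kruskal theorem, $P=\{\widetilde X: A\widetilde X\le b\}$ with integral $b$ is then integral. As an independent check, a more elementary order-theoretic argument works directly. Take a vertex $\widetilde X$ of $P$ with some fractional value $\alpha\in(0,1)$, and let $S$ be the set of entries equal to $\alpha$. Perturbing all entries of $S$ simultaneously by $\pm\epsilon$ keeps every difference inequality and every bound satisfied for small $\epsilon$, because pairs with both entries in $S$ are unaffected and pairs with exactly one entry in $S$ are strict. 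This writes $\widetilde X$ as a midpoint of two points of $P$, a contradiction. Hence every vertex is $0/1$. This is the argument for order polytopes.

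The main obstacle is bookkeeping rather than depth. I must confirm that the $\widetilde\delta$ bounds introduce no constraint outside the ``bound or difference'' form; in particular, the case $K=2$ makes $\widetilde\delta_{K,t}=1-\widetilde X_{1,t}$, which is still a bound. I must also justify the vertex correspondence under the affine lifting, so that integrality transfers from $P$ to $EB$, including the $\widetilde\delta$ coordinates. Finally, I will note the consequence that the projection of $EB$ onto $\widetilde\delta$-space is the convex hull of the contiguous assignments, since it is the projection of an integral polytope.
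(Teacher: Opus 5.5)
Your proof is correct. Its total-unimodularity core is the same key lemma the paper uses: rows with one $+1$ and one $-1$ form a transposed network matrix, appending bound rows preserves total unimodularity, and Hoffman--Kruskal then gives integrality. The difference is in how the system is decomposed.

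The paper applies total unimodularity directly to the full system in $(\widetilde X,\widetilde\delta)$-space. There, each $\widetilde\delta_{j,t}$ column is a unit vector supported on its defining row of \eqref{eq:ext_segment_assignment_1}; this is what the paper calls concatenation with an identity. Meanwhile, the $\widetilde X$-part of every row, including the rows of \eqref{eq:ext_segment_assignment_1}, has at most two nonzeros of opposite sign.

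You instead eliminate $\widetilde\delta$ through the affine bijection between $P$ and $EB$. This costs you the vertex-correspondence bookkeeping, but it leaves a cleaner object: $P$ is exactly the order polytope of the grid poset on the $(K-1)\times T$ entries, which is a product of two chains. That identification is what makes your second, TU-free perturbation argument work. It also gives an explicit description of the vertices as $0/1$ indicators of up-sets of the grid, i.e.\ monotone staircase patterns, which explains the subsequent corollary directly. The paper's route, by contrast, needs no change of variables or identification of the polytope, and the total unimodularity of the full matrix yields integrality for every integral right-hand side.

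One small slip: by \eqref{eq:ext_segment_assignment_3}, $\widetilde X_{j+1,t}\ge\widetilde X_{j,t}$, so $\widetilde X_{j,t}$ is dominated by $\widetilde X_{j+1,t}$, not the other way round. Neither of your arguments depends on the orientation, so this is harmless. Also, the ``contiguous assignments'' in your final remark must include assignments with empty segments, as Remark~\ref{rem:NEQF} shows.
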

\begin{proof}

Consider the constraint matrix associated with the constraints in (C1), expressed in the variables \(\{\widetilde{X}_{j,t}, \widetilde{\delta}_{j,t}\}\). After a suitable permutation of columns, this matrix can be written in block form as follows.

\begin{enumerate}
    \item The constraints~\eqref{eq:ext_segment_assignment_1} induce coefficients
    on the \(\widetilde{\delta}_{j,t}\) variables that form an identity submatrix.

    \item The constraints~\eqref{eq:ext_segment_assignment_1} - 
    \eqref{eq:ext_segment_assignment_3} induce coefficients on the
    \(\widetilde{X}_{j,t}\) variables such that each row contains at most two nonzero
    entries, each equal to \(+1\) or \(-1\), and whenever two nonzero entries
    appear in a row, they have opposite signs.
    Constraint matrices with this structure are well known to be totally
    unimodular.
\end{enumerate}

Total unimodularity is preserved under column permutations and under horizontal concatenation with an identity matrix. Consequently, the full constraint matrix associated with (C1) is totally unimodular. 
The bound constraints
$$ 0 \le \widetilde{X}_{j,t} \le 1, \qquad 0 \le \widetilde{\delta}_{j,t} \le 1,$$
can be expressed as linear inequalities with identity coefficient matrices (up to sign) in the corresponding variable columns.
Again, appending rows corresponding to identity matrices (or their negations) 
preserves total unimodularity. Therefore, the full constraint matrix associated with (C1), together with the variable bound constraints, is totally unimodular. Since the right-hand-side vector is integral, it follows that $EB$ is an integral polyhedron.
\end{proof}

Since the projection of an integral polytope onto a subset of its variables is also an integral polytope, the following corollary holds.

\begin{corollary}\label{cor:EBI}
    Let \(EB\) denote the polyhedron in the $(\widetilde{X}, \widetilde{\delta})$-space defined by the constraint set (C1), that is, constraints~\eqref{eq:ext_segment_assignment_1} - \eqref{eq:ext_segment_assignment_3}, together with 
bounds,
\[
\widetilde{X}_{j,t} \in [0,1], \qquad \widetilde{\delta}_{j,t} \in [0,1],
\quad \forall\, j, t.
\]
Then the projection of \(EB\) onto the space of $\widetilde{\delta}$ variables is integral.
\end{corollary}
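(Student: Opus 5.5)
The plan is to derive the corollary from Proposition~\ref{prop:integrality-C1-text} by showing that every vertex of the projection lifts to a vertex of \(EB\). Let \(P\) be the projection of \(EB\) onto the \(\widetilde\delta\)-coordinates. First, \(EB\) is a polytope: it is a polyhedron and all coordinates lie in \([0,1]\). Its linear image \(P\) is therefore also a polytope, namely the convex hull of the images of the vertices of \(EB\). The vertices of \(P\) are thus among the \(\widetilde\delta\)-parts of vertices of \(EB\).

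The key step is to justify that projection formally. Let \(v_1,\dots,v_N\) be the vertices of \(EB\); by Proposition~\ref{prop:integrality-C1-text} each is integral. Since \(EB=\mathrm{conv}\{v_1,\dots,v_N\}\) and projection is linear, we get \(P=\mathrm{conv}\{\pi(v_1),\dots,\pi(v_N)\}\). Any vertex of a convex hull of finitely many points is one of those points. Hence every vertex of \(P\) equals some \(\pi(v_i)\). This is an integer vector, since \(\pi\) simply drops the \(\widetilde X\) coordinates of an integral point. So \(P\) is integral.

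An even more direct route is available here. By \eqref{eq:ext_segment_assignment_1}, \(\widetilde\delta\) is an affine integral image of \(\widetilde X\). Conversely, \(\widetilde X_{j,t}=\sum_{i\le j}\widetilde\delta_{i,t}\), so the projection is an affine bijection between \(EB\) and \(P\). Such a unimodular map sends vertices to vertices and integral points to integral points, which again gives integrality. I would mention this as a remark, since it also shows that \(P\) is exactly the described set.

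The main obstacle is only a point of care rather than difficulty. Projections of integral polyhedra are integral in the polytope (bounded) case, but the general statement needs the vertex-hull representation. Boundedness of \(EB\) must therefore be noted explicitly; it follows immediately from the box constraints.
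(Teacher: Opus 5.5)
Your argument is correct and follows the paper's route. The paper derives the corollary from Proposition~\ref{prop:integrality-C1-text} in one line, citing the fact that the projection of an integral polytope is integral; your vertex/convex-hull argument, with boundedness coming from the box constraints, is exactly the justification of that fact. Your side remark that the projection is an affine bijection on \(EB\), because \(\widetilde X_{j,t}=\sum_{i\le j}\widetilde\delta_{i,t}\), is also valid but not needed.
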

\subsection{Fractionality of Basic and Alternative formulations}
In this section, we establish that the Basic and the Alternative formulations are not integral. We first consider the Basic formulation. Since the Basic and Extended Basic formulations have the same integer feasible region, in view of Corollary~\ref{cor:EBI}, it suffices to show that the LP relaxation of the Extended Basic formulation is strictly contained in that of the Basic formulation. In order to prove this result, we require a few preliminary definitions and properties.

\begin{definition}[Total Variation of a Vector]
\label{def:tv-vector}
Let $x \in \mathbb{R}^T$ be a vector indexed by $t=1,\ldots,T$. The total variation of $x$ is defined as $\mathrm{TV}(x) := \sum_{t=2}^T |x_t - x_{t-1}|.$
\end{definition}

\begin{lemma}[Total Variation Bound for Differences] \label{lem:tv-difference-vector}
Let $x,y \in \mathbb{R}^T$, and define $z = x - y$. Then $\mathrm{TV}(z) \le \mathrm{TV}(x) + \mathrm{TV}(y).$
\end{lemma}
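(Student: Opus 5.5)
The plan is to work directly from Definition~\ref{def:tv-vector} and reduce the claim to the triangle inequality for the absolute value on $\mathbb{R}$, applied term by term. Since $z = x - y$ coordinatewise, for each $t = 2,\ldots,T$ the increment of $z$ splits as
\[
z_t - z_{t-1} = (x_t - x_{t-1}) - (y_t - y_{t-1}).
\]

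The key steps, in order, are as follows.

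\begin{enumerate}
\item For each fixed $t$, apply the triangle inequality $|a - b| \le |a| + |-b| = |a| + |b|$ with $a = x_t - x_{t-1}$ and $b = y_t - y_{t-1}$. This gives
\[
|z_t - z_{t-1}| \le |x_t - x_{t-1}| + |y_t - y_{t-1}|.
\]
\item Sum this inequality over $t = 2,\ldots,T$. The finite sum is linear and preserves inequalities.
\item Identify the three resulting sums with $\mathrm{TV}(z)$, $\mathrm{TV}(x)$ and $\mathrm{TV}(y)$ via Definition~\ref{def:tv-vector}. This yields $\mathrm{TV}(z) \le \mathrm{TV}(x) + \mathrm{TV}(y)$.
\end{enumerate}

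There is no real obstacle. The only point needing care is the degenerate case $T = 1$, where all three sums are empty and equal to $0$, so the inequality holds trivially.

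An equivalent way to say the same thing is that $\mathrm{TV}$ is a seminorm on $\mathbb{R}^T$: it is the $\ell_1$ norm composed with the linear first-difference map $D$. Subadditivity together with $\mathrm{TV}(-y) = \mathrm{TV}(y)$ then gives the claim immediately.

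The purpose of the lemma in what follows is presumably to bound the total variation of $\widetilde\delta_{j,\cdot} = \widetilde X_{j,\cdot} - \widetilde X_{j-1,\cdot}$. For that application no sign or monotonicity assumptions on $x$ and $y$ are needed, so I would state and prove the lemma in the full generality given.
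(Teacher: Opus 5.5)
Your proposal is correct and takes the same approach as the paper's proof. Both write $z_t - z_{t-1} = (x_t - x_{t-1}) - (y_t - y_{t-1})$, apply the triangle inequality term by term, and sum over $t = 2,\ldots,T$; your remarks on the empty-sum case $T=1$ and on $\mathrm{TV}$ being a seminorm are harmless additions.
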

\begin{proof}

By definition,
\begin{eqnarray*}
    \mathrm{TV}(z) &=& \sum_{t=2}^T |z_t - z_{t-1}| = \sum_{t=2}^T |(x_t - x_{t-1}) - (y_t - y_{t-1})| \\
    &\leq&\sum_{t=2}^T |x_t - x_{t-1}| + \sum_{t=2}^T |y_t - y_{t-1}| =\mathrm{TV}(x) + \mathrm{TV}(y),
\end{eqnarray*}
where the inequality is obtained by using the triangle inequality term-wise.
\end{proof}

\begin{proposition}[Strict Containment of the Extended Basic Projection]\label{prop:projection_subset_Basic}
Let $\widetilde{B}$ and $\widetilde{EB}$ denote the polytopes obtained by projecting the LP relaxations of the Basic and Extended Basic formulations, respectively, onto the $\delta$-variable space. 
Then, $ \widetilde{EB} \subsetneq \widetilde{B} $. As a consequence, the Basic formulation is not integral in the $\delta$-space.
\end{proposition}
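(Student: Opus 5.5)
The plan is to prove the inclusion $\widetilde{EB} \subseteq \widetilde{B}$ by dropping the extended variables, and then to prove strictness by exhibiting a fractional point of $\widetilde B$ that violates a monotonicity (total-variation) property which every point of $\widetilde{EB}$ must satisfy.

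\textbf{Inclusion.} Take a point $(\widetilde X,\widetilde\delta,m,c,r,\hat y)$ of the LP relaxation of the Extended Basic formulation. Summing \eqref{eq:ext_segment_assignment_1} over $j$ telescopes to $\sum_j\widetilde\delta_{j,t}=1$, which is \eqref{eq:A1_segment_assignment}. The remaining constraints, namely \eqref{eq:A2_upper}--\eqref{eq:A3_breakpoints_ordering}, the optional continuity constraints and the bounds, are literally the same in both models. Hence $(\widetilde\delta,m,c,r,\hat y)$ lies in the LP relaxation of the Basic formulation, and projecting onto $\delta$ gives $\widetilde{EB}\subseteq\widetilde B$.

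\textbf{A necessary condition for $\widetilde{EB}$.} By \eqref{eq:ext_segment_assignment_1}--\eqref{eq:ext_segment_assignment_2}, every $\widetilde\delta\in\widetilde{EB}$ has $\widetilde\delta_{1,\cdot}=\widetilde X_{1,\cdot}$ nonincreasing in $t$. More generally, each $\widetilde X_{j,\cdot}$ is monotone in $[0,1]$, so $\mathrm{TV}(\widetilde X_{j,\cdot})\le 1$. Lemma~\ref{lem:tv-difference-vector} then gives $\mathrm{TV}(\widetilde\delta_{j,\cdot})\le 2$ for interior $j$, and $\mathrm{TV}(\widetilde\delta_{j,\cdot})\le 1$ for $j\in\{1,K\}$. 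It therefore suffices to find a point of $\widetilde B$ whose first row is not nonincreasing, or more robustly has $\mathrm{TV}(\widetilde\delta_{1,\cdot})>1$.

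\textbf{Strictness.} I would use the smallest case, $K=2$ and $T\ge 3$. Set $\widetilde\delta_{1,t}=\tfrac12$ for all $t$, except $\widetilde\delta_{1,2}=\tfrac12-\varepsilon$ (so $\widetilde\delta_{1,\cdot}$ increases from $t=2$ to $t=3$), and $\widetilde\delta_{2,t}=1-\widetilde\delta_{1,t}$. This point violates \eqref{eq:ext_segment_assignment_2}, so it is not in $\widetilde{EB}$. To show it lies in $\widetilde B$ I complete it as follows.
\begin{itemize}
\item Take $m_1=m_2$ and $c_1=c_2$ equal to any admissible line in $\boldsymbol\Theta$, and set $\hat y_t=m_1x_t+c_1$. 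Then \eqref{eq:A2_upper}--\eqref{eq:A2_lower} hold with zero slack, and continuity \eqref{eq:A4_continuity} holds trivially for any $r_1$.
\item Take $r_0=x_1$, $r_2=x_T$, and $r_1=(x_1+x_T)/2$, so that the ordering constraints hold.
\item It remains to check \eqref{eq:A3_right}--\eqref{eq:A3_left}. Every $1-\widetilde\delta_{j,t}$ is at least $\tfrac12-\varepsilon$, so these reduce to inequalities of the form $x_t-r_1\le M^A_{2,t}(\tfrac12-\varepsilon)$ and $r_1-x_t\le M^A_{3,t}(\tfrac12-\varepsilon)$. Each deviation $|x_t-r_1|$ is at most $(x_T-x_1)/2$.
\end{itemize}

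\textbf{Main obstacle.} The hard step is the last one: it needs the big-$M$ constants to satisfy $M^A_{2,t},M^A_{3,t}\ge (x_T-x_1)/(1-2\varepsilon)$. I expect the Appendix~\ref{sec:bigM} construction to give $M^A_{2,t},M^A_{3,t}$ at least on the order of the full range $x_T-x_1$, since they must deactivate breakpoint constraints for any integral assignment. If they are only exactly that range, a small $\varepsilon$ together with the midpoint choice of $r_1$ still works. If the constants turn out smaller, the fallback is to use larger $K$, spreading mass so that every $\widetilde\delta_{j,t}$ is small, for example at most $1/K$. This makes $1-\widetilde\delta_{j,t}$ close to $1$, so each constraint only needs $M\gtrsim |x_t-r_j|$, which the natural big-$M$ certainly satisfies. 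In that case non-monotonicity is placed in $\widetilde\delta_{1,\cdot}$, or $\mathrm{TV}>2$ in an interior row, contradicting the bound above.

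\textbf{Consequence.} Since the Basic and Extended Basic formulations have the same integer points (Remark~\ref{rem:EQF}), Corollary~\ref{cor:EBI} identifies $\widetilde{EB}$ with the convex hull of those integer points. A strictly larger $\widetilde B$ therefore has a fractional vertex, so the Basic formulation is not integral in the $\delta$-space.
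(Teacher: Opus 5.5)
Your inclusion argument (summing (C1) telescopes to \eqref{eq:A1_segment_assignment}, and all other constraints are shared) is correct. So is your obstruction: the first row of any point of $\widetilde{EB}$ equals $\widetilde X_{1,\cdot}$, hence is nonincreasing by \eqref{eq:ext_segment_assignment_2}. Combined with a $K=2$ witness, this would give a simpler certificate than the paper's, which bounds the total variation of interior rows by $2$ and exhibits a $K=4$, $T=8$ point whose second row has total variation $2.1$.

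The gap is the step that carries all the weight: showing your witness lies in $\widetilde B$. You leave it conditional on $M^A_{2,t},M^A_{3,t}\ge (x_T-x_1)/(1-2\varepsilon)$. The paper's constants are $M^A_{2,t}=x_t-x_1$ and $M^A_{3,t}=x_T-x_t$ (Appendix~\ref{sec:bigM}). They never exceed $x_T-x_1$ and vanish at $t=1$ and $t=T$ respectively, so your sufficient condition fails for every $t$. Your fallback does not repair this as stated. If every $\widetilde\delta_{j,t}\le 1/K$ and each column sums to $1$, then every entry equals $1/K$, which is a constant pattern lying in $\widetilde{EB}$.

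Your witness is in fact feasible; what is missing is the exact, position-dependent check. Take $r_0=x_1$, $r_1=\tfrac{x_1+x_T}{2}$, $r_2=x_T$.
\begin{itemize}
\item Constraint \eqref{eq:A3_left} for $j=1$ and \eqref{eq:A3_right} for $j=2$ are trivial.
\item Wherever $1-\widetilde\delta_{j,t}\ge\tfrac12$, \eqref{eq:A3_right} for $j=1$ follows from $x_t\le x_T$, and \eqref{eq:A3_left} for $j=2$ follows from $x_1\le x_t$.
\item The only remaining constraint is \eqref{eq:A3_left} at $(j,t)=(2,2)$, where $1-\widetilde\delta_{2,2}=\tfrac12-\varepsilon$. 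It reads $\varepsilon(x_T-x_2)\le\tfrac12(x_2-x_1)$, which holds for small $\varepsilon>0$ because $x_2>x_1$.
\end{itemize}
The placement of the dip is essential, which your uniform-bound reasoning cannot see. With the dip at $t=1$, \eqref{eq:A3_right} at $(j,t)=(1,T)$ forces $r_1\ge\tfrac{x_1+x_T}{2}$. Meanwhile \eqref{eq:A3_left} at $(j,t)=(2,1)$ forces $r_1\le\tfrac{x_1+x_T}{2}-\varepsilon(x_T-x_1)$, so no $r_1$ works.

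Alternatively, use the mechanism behind Appendix~\ref{sec:prop_continued}. With all interior breakpoints at the midpoint, every localization constraint holds automatically once all entries are at most $\tfrac12$. For $K=2$ this forces the constant pattern. For $K=3$, a first row such as $(\tfrac12,0,\tfrac12,\ldots,\tfrac12)$, with each column completed by entries at most $\tfrac12$, already separates $\widetilde B$ from $\widetilde{EB}$.
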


\begin{proof}

By Corollary~\ref{cor:EBI} and Remark~\ref{rem:EQF}, we have that $\widetilde{EB} \subseteq \widetilde{B}$. It remains to show that this inclusion is strict. We demonstrate that there exist
$\widetilde{\delta}$-patterns that satisfy the LPR of the Basic formulation but cannot
arise from any feasible solution of constraint set (C1) (constraints~\eqref{eq:ext_segment_assignment_1} - \eqref{eq:ext_segment_assignment_3}); hence, they do not belong to $\widetilde{EB}$. The key observation is that the relaxed variables $\widetilde{X}_{j,t}$ appearing in (C1) form nested, monotonically non-increasing relaxed sequences as a consequence of constraints
\eqref{eq:ext_segment_assignment_2} - \eqref{eq:ext_segment_assignment_3}.
This imposes a restriction on the total variation of differences
between such sequences.  In particular, observe that
\begin{eqnarray*}
\mathrm{TV}(\widetilde{X}_{j,\cdot})
= \sum_{t=2}^T \left| \widetilde{X}_{j,t} - \widetilde{X}_{j,t-1} \right|
= \sum_{t=2}^T \left( \widetilde{X}_{j,t-1} - \widetilde{X}_{j,t} \right)
= \widetilde{X}_{j,1} - \widetilde{X}_{j,T}
\leq 1,
\end{eqnarray*}
where the first equality follows from the fact that $\widetilde{X}_{j,t -1} \geq \widetilde{X}_{j,t},$ and the last inequality follows from the fact that $\widetilde{X}_{j,t} \in [0,1]$. We now apply Lemma~\ref{lem:tv-difference-vector} to obtain
$$\mathrm{TV}(\widetilde{\delta}_{j,\cdot}) \leq \mathrm{TV}(\widetilde{X}_{j,\cdot}) + \mathrm{TV}(\widetilde{X}_{j-1,\cdot}) \leq 2,$$
where we use the fact that $\widetilde{\delta}_{j,t} = \widetilde{X}_{j,t} - \widetilde{X}_{j-1,t}$.
Consequently, any $\widetilde{\delta}$-sequence whose total variation exceeds~2 cannot belong to $\widetilde{EB}$. We provide an explicit example in which the sequences $\widetilde{\delta}_{2,\cdot}$ and $\widetilde{\delta}_{3,\cdot}$ attain total variation exceeding~2, rendering them infeasible for the LPR of the Extended Basic formulation while remaining feasible for the Basic relaxation. The
corresponding segment-assignment vectors are:
\[
\widetilde{\delta}_{1,\cdot} = [0.5, 0.5, 0.5, 0.5, 0.5, 0.5, 0.5, 0.5],
\]
\[
\widetilde{\delta}_{2,\cdot} = [0.4, 0.1, 0.4, 0.1, 0.4, 0.1, 0.4, 0.1],
\]
\[
\widetilde{\delta}_{3,\cdot} = [0.1, 0.4, 0.1, 0.4, 0.1, 0.4, 0.1, 0.4],
\]
\[
\widetilde{\delta}_{4,\cdot} = [0,0,0,0,0,0,0,0].
\]

As shown in Appendix~\ref{sec:prop_continued}, this $\widetilde{\delta}$ provides a feasible solution satisfying all the constraints of the LPR of the Basic formulation; hence, the above segment-assignment vectors belong to $\widetilde{B}$. 
\end{proof}

Next, we consider the Alternative formulation. Recall that the feasible region of the Extended formulation differs from that of the Alternative formulation, so no direct comparison between the two is made here. Rather, we aim to establish that the segment-assignment constraints of the Alternative formulation do not constitute an integral polytope. 

\begin{proposition}[Non-Integrality of polytope defined by (B1) Constraints]
\label{prop:Nonintegrality-B1-text}
Let \(A\) denote the polyhedron defined by the constraint set (B1),
that is, constraints~\eqref{eq:B1_assign} - \eqref{eq:B1_last},
together with bounds:
\[
\widetilde{\delta}_{j,t} \in [0,1],
\quad \forall\, j = 1,\ldots,K,\; t = 1,\ldots,T.
\]
Then the polyhedron \(A\) is not integral.
\end{proposition}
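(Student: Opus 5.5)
The plan is to exhibit a single fractional point that is an extreme point of \(A\) for a small instance. It is enough to find one pair \((K,T)\) for which \(A\) has a fractional vertex. For \(K=2\) the polytope is integral: constraint \eqref{eq:B1_assign} forces \(\widetilde\delta_{2,t}=1-\widetilde\delta_{1,t}\), so \eqref{eq:B1_contiguity}–\eqref{eq:B1_last} reduce to the monotone chain \(\widetilde\delta_{1,1}\ge\cdots\ge\widetilde\delta_{1,T}\), which is totally unimodular. The source of fractionality must therefore be the three-variable contiguity constraint \eqref{eq:B1_contiguity}. Its row \(\widetilde\delta_{j+1,t+1}-\widetilde\delta_{j,t}-\widetilde\delta_{j+1,t}\le 0\) has two entries of the same sign, so the TU argument used for Proposition~\ref{prop:integrality-C1-text} does not apply. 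I will take \(K=3\) and a small \(T\) (first \(T=4\), increasing if needed).

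Rather than certifying a vertex directly by a rank computation, I will use an equivalent separation certificate. I will choose an objective vector \(w\) such that \(\min\{w^\top\widetilde\delta:\widetilde\delta\in A\}\) is strictly smaller than \(\min\{w^\top\delta:\delta\in A\cap\{0,1\}^{KT}\}\). If \(A\) were integral, these two minima would coincide, so a strict gap proves the claim. The integer points are easy to list. By Remark~\ref{rem:AMCSA} they are exactly the contiguous, ordered assignments that start in segment \(1\) (by \eqref{eq:B1_first} and \eqref{eq:B1_contiguity}, segment \(j+1\) can only be entered from segment \(j\)). For \(T=4\), \(K=3\) these are sequences such as \(1111, 1112, 1122, 1123, 1222, 1223, 1233, \dots\), which is a handful of points. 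Their objective values under \(w\) can be tabulated directly.

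To find the fractional point, I will exploit the fact that the right-hand side of \eqref{eq:B1_contiguity} sums two fractional quantities. This lets segment \(j+1\) ``re-enter'' with weight up to \(\widetilde\delta_{j,t}+\widetilde\delta_{j+1,t}\), which allows oscillating patterns in the spirit of the example in Proposition~\ref{prop:projection_subset_Basic}. My first candidate is
\[
\widetilde\delta_{1,\cdot}=(1,\tfrac12,\tfrac12,0),\quad
\widetilde\delta_{2,\cdot}=(0,\tfrac12,0,\tfrac12),\quad
\widetilde\delta_{3,\cdot}=(0,0,\tfrac12,\tfrac12).
\]
This lets segment \(3\) enter at \(t=3\) using the mass \(\widetilde\delta_{2,2}=\tfrac12\), while segment \(2\) drops to zero and then reappears at \(t=4\) using \(\widetilde\delta_{1,3}=\tfrac12\). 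I will verify every constraint of (B1) for this point. Then I will take \(w\) rewarding exactly the non-contiguous features (for example, a negative weight on \(\widetilde\delta_{2,4}\) and \(\widetilde\delta_{3,3}\), and a positive weight on \(\widetilde\delta_{2,3}\) and \(\widetilde\delta_{3,4}\)) and compare against the enumerated integer points. The pattern ``segment 2 at \(t=4\) and segment 3 at \(t=3\)'' is impossible for integer points but is half-achieved here.

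The main obstacle is ensuring the chosen point is not secretly a convex combination of integer assignments. A naive guess can lie in the integer hull: for \(T=3\) the point \((1;\ \tfrac12\text{-}1,\tfrac12\text{-}2;\ \tfrac12\text{-}2,\tfrac12\text{-}3)\) equals the average of \(112\) and \(123\). The objective-gap certificate resolves this cleanly. If the first candidate fails, I will adjust weights or lengthen \(T\), since longer horizons allow more alternation. As a fallback, I will confirm vertexhood directly by exhibiting \(KT\) linearly independent tight constraints among \eqref{eq:B1_assign}–\eqref{eq:B1_last} and the bounds.
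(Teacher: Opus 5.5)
\textbf{Gap: no working instance, and $K=3$ can never supply one.} Your certificate logic is sound: an LP/IP objective gap, or $KT$ linearly independent tight constraints, would each prove the claim. But the proposal never produces a valid instance.

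Your candidate point lies inside the integer hull. Writing integer assignments as label sequences, it equals $\tfrac12(1,2,3,3)+\tfrac12(1,1,1,2)$, and both sequences satisfy (B1). So it is not a vertex, and no objective separates it. For your suggested $w$, the LP value is exactly the average of the two integer values.

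More fundamentally, for $K=3$ the polytope $A$ is integral for every $T$, so lengthening $T$ cannot rescue the plan. Put $a_t=\widetilde\delta_{1,t}$ and $c_t=\widetilde\delta_{3,t}$, and eliminate $\widetilde\delta_{2,t}=1-a_t-c_t$ using \eqref{eq:B1_assign}.
\begin{itemize}
\item The $j=1$ rows of \eqref{eq:B1_contiguity} become $a_{t+1}+c_{t+1}\ge c_t$. These are implied by \eqref{eq:B1_last} together with $a_{t+1}\ge 0$, so they can be dropped.
\item The $j=2$ rows become $a_t+c_{t+1}\le 1$.
\end{itemize}
The remaining system is $a_{t+1}\le a_t$, $c_t\le c_{t+1}$, $a_t+c_{t+1}\le 1$, $0\le a_t+c_t\le 1$, and $0\le a_t,c_t\le 1$. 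After replacing $c_t$ by $-c_t$, every row has at most two nonzero entries, each $\pm1$ and of opposite signs. It is therefore totally unimodular with integral right-hand side. Your diagnosis that the three-term rows of \eqref{eq:B1_contiguity} cause fractionality is correct, but for $K=3$ they collapse to two-term rows. You need $K\ge 4$.

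\textbf{Second error: the integer points are mis-described.} Nothing in (B1) forces $\widetilde\delta_{1,1}=1$. The integer points are all label sequences that are nondecreasing with steps in $\{0,1\}$, and they may start in any segment. For example, $(3,3,3,3)$ and $(2,2,3,3)$ are feasible for $K=3$. Minimizing only over sequences that start in segment $1$ overestimates the integer optimum, which could manufacture a spurious gap.

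\textbf{How the paper's example fixes this.} The paper's fractional vertex ($K=4$, $T=6$, certified by $24$ linearly independent tight constraints) relies on exactly such a start, in segment $3$. Its first two columns are $\widetilde\delta_{\cdot,1}=(\tfrac12,0,\tfrac12,0)$ and $\widetilde\delta_{\cdot,2}=(\tfrac12,\tfrac12,0,0)$. These two columns alone complete your separation route with $K=4$, $T=2$:
\begin{itemize}
\item The point satisfies (B1).
\item Every integer point satisfies $\delta_{3,1}\le\delta_{3,2}+\delta_{4,2}$. From label $3$ one can move only to label $3$ or $4$: label $1$ at $t=2$ is excluded by \eqref{eq:B1_first}, and label $2$ by the $j=1$ row of \eqref{eq:B1_contiguity}.
\item Hence the objective $-\widetilde\delta_{3,1}+\widetilde\delta_{3,2}+\widetilde\delta_{4,2}$ has LP minimum at most $-\tfrac12$ but integer minimum at least $0$.
\end{itemize}
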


\begin{proof}

We consider an instance with $T = 6$ data points and $K = 4$ candidate segments and show that this instance admits a fractional extreme point. The segment–assignment variables take the following values:
\begin{eqnarray*}
\widetilde{\delta}_{1,\cdot} &=& [0.5,\; 0.5,\; 0.5,\; 0,\; 0,\; 0],\\
\widetilde{\delta}_{2,\cdot} &=& [0,\; 0.5,\; 0,\; 0.5,\; 0.5,\; 0.5],\\
\widetilde{\delta}_{3,\cdot} &=& [0.5,\; 0,\; 0.5,\; 0,\; 0,\; 0], \\
\widetilde{\delta}_{4,\cdot} &=& [0,\; 0,\; 0,\; 0.5,\; 0.5,\; 0.5].
\end{eqnarray*}

It is straightforward to verify that this point is feasible for the LPR of the Alternative formulation. To establish that this point is an extreme point of $A$, we identify $24$ linearly independent constraints that are tight as the ambient space is $\mathbb{R}^{24}$ (corresponding to the variables $\{\widetilde{\delta}_{j,t}\}_{j=1,\ldots,4;\, t=1,\ldots,6}$).

\begin{enumerate}
\item {Assignment constraints.}  
There are $6$ equality constraints of type~\eqref{eq:B1_assign}:
\[
\sum_{j=1}^4 \widetilde{\delta}_{j,t} = 1, \qquad \forall t = 1,\ldots,6.
\]

\item {Contiguity constraints.}  
The following $5$ constraints of type~\eqref{eq:B1_contiguity} are tight:
\[
\begin{aligned}
\widetilde{\delta}_{2,2} &= \widetilde{\delta}_{1,1} + \widetilde{\delta}_{2,1}, \\
\widetilde{\delta}_{2,4} &= \widetilde{\delta}_{1,3} + \widetilde{\delta}_{2,3}, \\
\widetilde{\delta}_{2,5} &= \widetilde{\delta}_{1,4} + \widetilde{\delta}_{2,4}, \\
\widetilde{\delta}_{2,6} &= \widetilde{\delta}_{1,5} + \widetilde{\delta}_{2,5}, \\
\widetilde{\delta}_{3,3} &= \widetilde{\delta}_{2,2} + \widetilde{\delta}_{3,2}, 
\end{aligned}
\]

\item {Boundary constraints.}  
The following constraint of type~\eqref{eq:B1_first} - \eqref{eq:B1_last} is tight: $\widetilde{\delta}_{1,1} = \widetilde{\delta}_{1,2}$.

\item {Variable bound constraints.}  
Finally, the following $12$ variable bounds are tight:
$$\begin{aligned}
\widetilde\delta_{1,4}&=0, & \widetilde\delta_{1,5}&=0, & \widetilde\delta_{1,6}&=0,\\
\widetilde\delta_{2,1}&=0, & \widetilde\delta_{2,3}&=0,\\
\widetilde\delta_{3,2}&=0, & \widetilde\delta_{3,4}&=0, & \widetilde\delta_{3,5}&=0, & \widetilde\delta_{3,6}&=0,\\
\widetilde\delta_{4,1}&=0, & \widetilde\delta_{4,2}&=0, & \widetilde\delta_{4,3}&=0.
\end{aligned}$$
\end{enumerate}

It remains to justify that the above $24$ tight constraints are linearly independent. We do this by showing that they uniquely determine all $24$ variables.

First, the 12 tight bound constraints immediately fix
$\widetilde\delta_{1,4} =\widetilde \delta_{1,5}= \widetilde \delta_{1,6}=\widetilde\delta_{2,1}=\widetilde\delta_{2,3} =\widetilde\delta_{3,2}=\widetilde\delta_{3,4}=\widetilde\delta_{3,5}=\widetilde\delta_{3,6}= \widetilde\delta_{4,1}=\widetilde\delta_{4,2}= \widetilde\delta_{4,3}=0.$ The boundary constraints imply
$\widetilde \delta_{1,1}=\widetilde\delta_{1,2}$.
Using the tight contiguity constraints (and the tight bound constraints), we now obtain $\widetilde\delta_{2,2}=\widetilde\delta_{1,1}$,
$\widetilde\delta_{2,4}=\widetilde\delta_{1,3}$, $\widetilde\delta_{2,5}=\widetilde\delta_{2,4}$,
$\widetilde\delta_{2,6}=\widetilde\delta_{2,5}$,
$\widetilde\delta_{3,3}=\widetilde\delta_{2,2}$. 

The assignment constraints then determine the remaining variables:
$$\widetilde\delta_{1,1}+\widetilde\delta_{3,1}=1,$$
$$\widetilde\delta_{1,2}+\widetilde\delta_{2,2}=1, $$
$$\widetilde\delta_{1,3}+\widetilde\delta_{3,3}=1,$$
$$\widetilde\delta_{2,4}+\widetilde\delta_{4,4}=1,$$
$$\widetilde\delta_{2,5}+\widetilde\delta_{4,5}=1,$$
$$\widetilde\delta_{2,6}+\widetilde\delta_{4,6}=1.$$
Together these equations uniquely determine all variables, yielding exactly
the point $\widetilde{\delta}$. 
\end{proof}
We summarize the key insights from the analysis in this section as follows. The projection of the Extended formulations onto the segment-assignment variables is integral, while the same is not true for the Basic and the Alternative formulations. While these analyses provide insight into potential differences in LP relaxation strength and solver convergence behavior, the full MIP formulations also differ in how slopes and intercepts are assigned to each segment in the piecewise linear fitting problem. As a result, the polyhedral comparisons in this section are not necessarily conclusive with respect to overall computational performance (also see~\cite{shah2026non}), which is therefore assessed empirically in Section~\ref{sec:experiments}.

\section{Experiments}
\label{sec:experiments}
We conduct computational experiments to compare the relative runtime performance
of the proposed extended segment-assignment constraint formulation against the
existing segment-assignment formulations, namely the Basic and Alternative
formulations. These experiments complement and further substantiate the
theoretical analysis provided in Section~\ref{sec:Comparison_of_LP_relaxations}. All experiments were conducted using Gurobi Optimizer 13.0.2 on an Apple M3 processor with eight CPU cores. Each optimization run was restricted to a single Gurobi thread to control for variability arising from parallel execution, processor resource contention, and processor frequency scaling, thereby enabling fair and reproducible comparisons across formulations. All results were replicated using three random seeds (0, 10000, and 20000). The reported runtimes and relative optimality gaps correspond to the arithmetic mean across these three independent runs. A wall-clock time limit of 2000 seconds was imposed for every run, and non-default parameters were kept minimal to avoid solver-specific tuning effects. The source code used to generate all computational results presented in this paper, together with the corresponding experimental outputs, is publicly available at \url{https://github.com/ApoorvaGOR/MIP_for_Change_point_Detection}.

\subsection{Experiments: Univariate sequence without continuity}
\label{subsec:computational_1D_without_continuity}

The comparison under the non-continuous setting focuses solely on the computational performance of the segment-assignment constraint blocks across the MIP formulations. Accordingly, in this setting, we compare three MIP formulations corresponding to the Basic, Alternative, and Extended segment-assignment constraint blocks. For the time-series data, we use four stock tickers, namely Apple Inc. (AAPL), Amazon.com Inc. (AMZN), Alphabet Inc. (GOOGL), and Microsoft Corporation (MSFT), with daily closing prices (starting January~1, 2015) obtained from the \texttt{yfinance} Python library. We consider time-series lengths $T \in \{100, 200, 300, 400, 500\}$, segment counts $K \in \{2,3,4,5,6\}$, and objective functions under both $\ell_1$- and $\ell_2$-error norms. This yields 200 distinct experimental settings.


\subsection{Experiments: Univariate sequence with continuity}
\label{subsec:computational_1D_with_continuity}

Since the Extended segment-assignment block is modular, it can be paired with either the Basic or Alternative continuity-enforcing constraints, yielding four MIP formulations: Basic, Alternative, Extended Basic, and Extended Alternative. We compare these across time-series lengths $T \in \{100, 120, 140, 160, 180\}$ and segment counts $K \in \{2,3,4,5\}$ under both $\ell_1$- and $\ell_2$-error norms, using daily closing prices (starting January~1, 2015) for stock tickers Apple Inc.\ (AAPL), Amazon.com Inc. (AMZN), Alphabet Inc. (GOOGL), Johnson \& Johnson (JNJ), and Microsoft Corporation (MSFT). Here, shorter time horizons are considered because, for longer time-series lengths and higher segment counts, the formulations more frequently hit the runtime limit. This yields 200 distinct experimental settings.

\subsection{Experiments: Multi-dimensional change-point detection}
\label{sec:multiD_experiments}

We evaluate the multi-dimensional formulations using daily closing prices (starting January~1,~2015) for ten stock tickers, namely Apple Inc.\ (AAPL), Microsoft Corporation (MSFT), Amazon.com Inc.\ (AMZN), Alphabet Inc.\ (GOOGL), Meta Platforms, Inc.\ (META), Tesla, Inc.\ (TSLA), NVIDIA Corporation (NVDA), JPMorgan Chase \& Co.\ (JPM), Berkshire Hathaway Inc.\ Class B (BRK-B), and Johnson \& Johnson (JNJ). A $D$-dimensional signal is constructed by jointly fitting the first $D$ tickers from this list. We consider $D \in \{2,4,6,8,10\}$, $T \in \{100,200,300,400,500\}$, and $K \in \{2,3,4,5\}$. Since this setting also considers piecewise linear fitting without continuity, as in Section~\ref{subsec:computational_1D_without_continuity}, we again compare the Basic, Alternative, and Extended formulations, yielding a total of 200 experimental configurations.

\section{Results}
\label{sec:results}
\begin{figure}[H]
\centering

\begin{subfigure}[b]{0.31\textwidth}
    \centering
    \includegraphics[width=\textwidth]{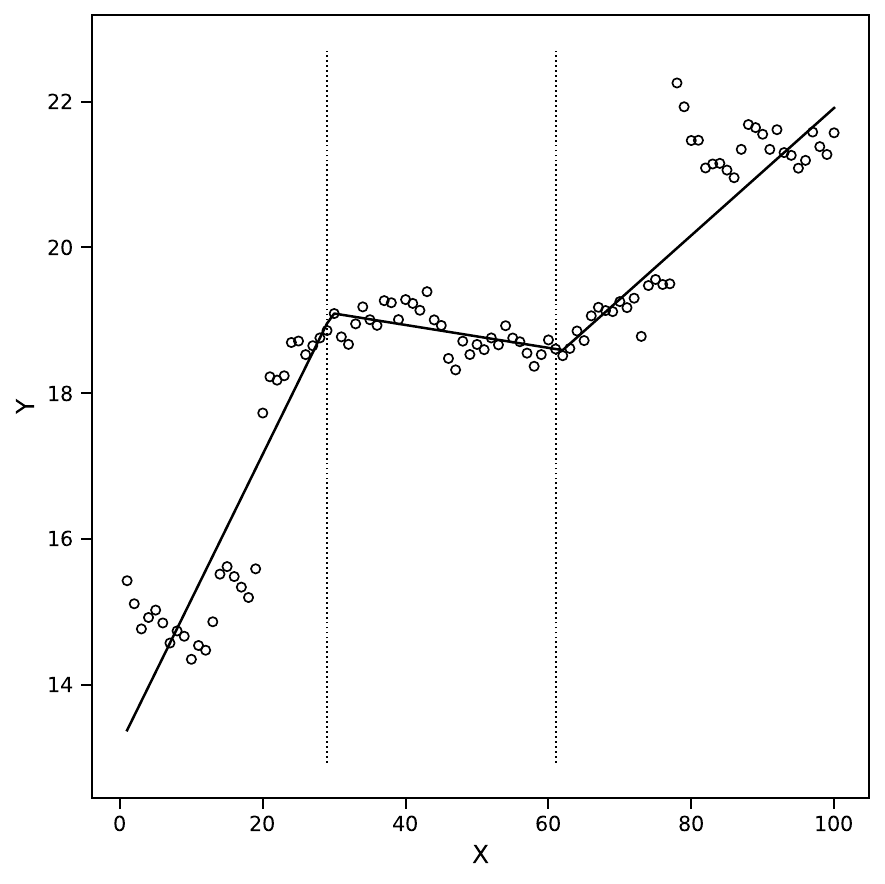}
    \caption{$T = 100$, $K = 3$}
    \label{fig:Univariate_with_continuity}
\end{subfigure}
\hfill
\begin{subfigure}[b]{0.31\textwidth}
    \centering
    \includegraphics[width=\textwidth]{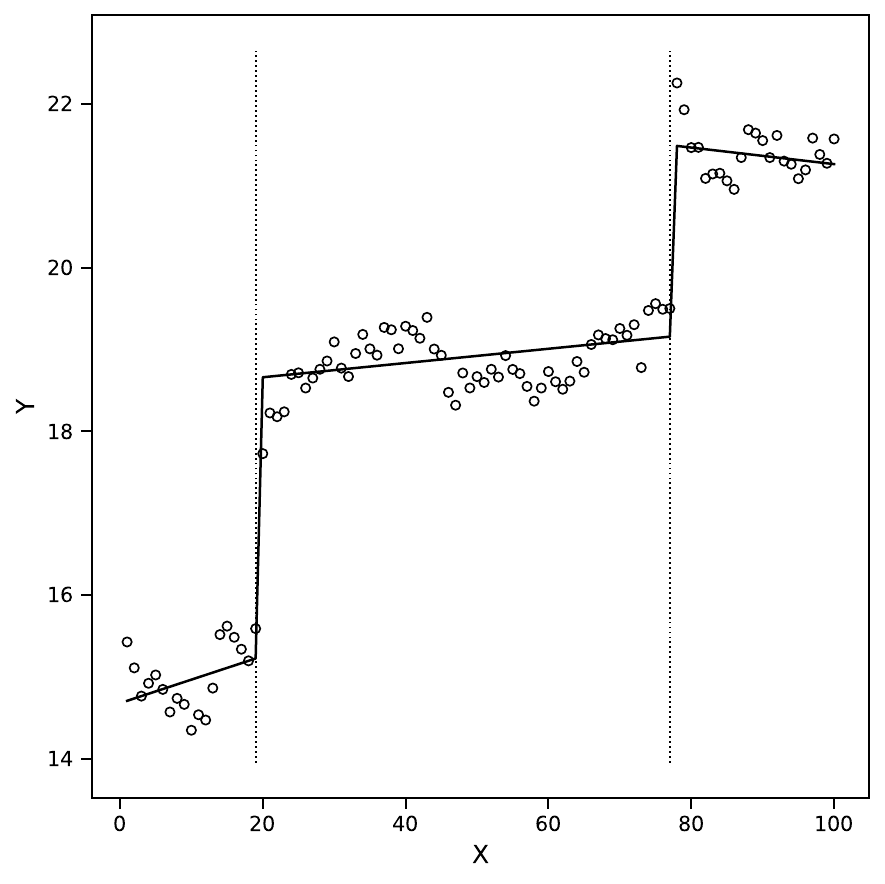}
    \caption{$T = 100$, $K = 3$}
    \label{fig:Univariate_without_continuity}
\end{subfigure}
\hfill
\begin{subfigure}[b]{0.31\textwidth}
    \centering
    \includegraphics[width=\textwidth]{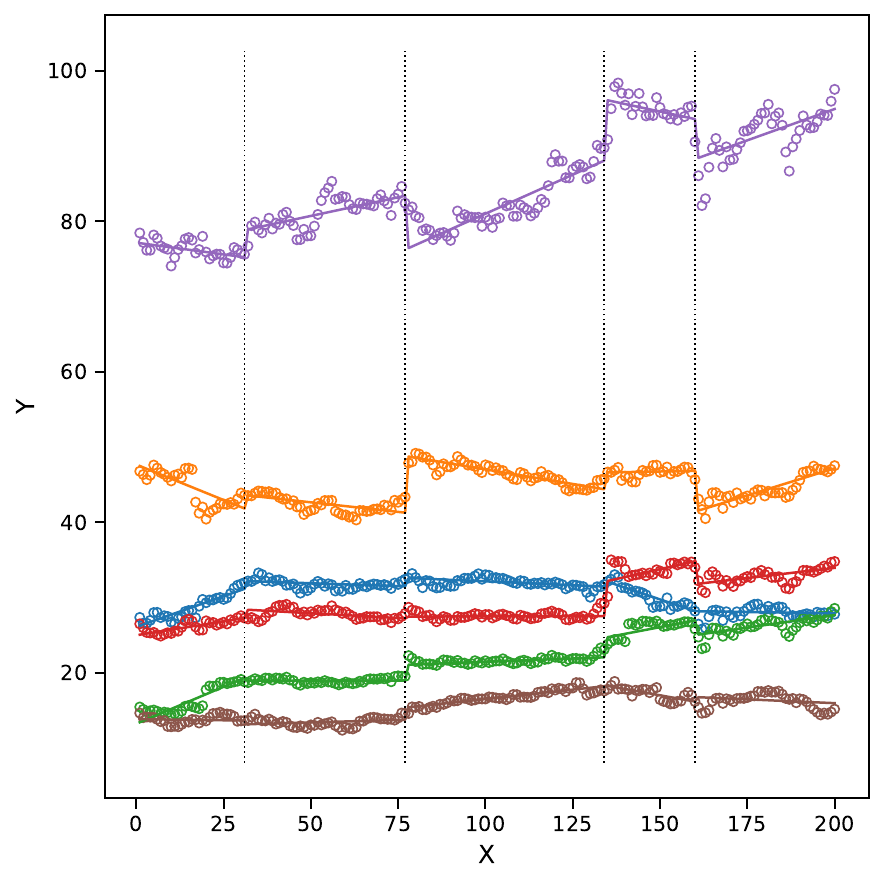}
    \caption{$T = 200$, $K = 5$, $D = 6$}
    \label{fig:Multi_CPD}
\end{subfigure}

\caption{
Illustration of piecewise-linear fits obtained through solving the MIP formulations while minimizing the $\ell_1$ error norm: 
\textnormal{(a)} univariate fitting with continuity based on Amazon (AMZN) closing prices, 
\textnormal{(b)} univariate fitting without continuity using the same data as (a), and 
\textnormal{(c)} multi-dimensional fitting with shared change-points across \(D=6\) time series.
}

\label{fig:PWL_CPD_examples}
\end{figure}

\begin{table}[H]
\centering
\small
\renewcommand{\arraystretch}{1.2}
\setlength{\tabcolsep}{6pt}

\caption{
Summary of runtime performance across all computational experiments. Each entry reports the number of instances in which a formulation attained the lowest arithmetic mean runtime across three random seeds. If all formulations reached the 2000-second time limit for an instance, the comparison is instead based on the arithmetic mean relative optimality gap.
}
\label{tab:unified_benchmark_summary}

\begin{tabular}{l l c c c c c}
\toprule
\textbf{Experiment Class} & \textbf{Setting} & \textbf{Norm}
& \textbf{Basic} & \textbf{Alternative} & \textbf{Extended} & \textbf{Total} \\
\midrule

\multirow{4}{*}{\parbox{3.1cm}{\centering Single-Dimensional \\ PWL Fitting}}

& \multirow{2}{*}{Without Continuity}
  & $\ell_1$ & 4  & 36 & \textbf{60} & 100 \\
& & $\ell_2$ &  0 & 23 & \textbf{77} & 100 \\

\cmidrule(lr){2-7}

& \multirow{2}{*}{With Continuity}
  & $\ell_1$ & 42 & 8 & \textbf{50} & 100 \\
& & $\ell_2$ & 0  & 28 & \textbf{72} & 100 \\

\midrule

\multirow{2}{*}{\parbox{3.1cm}{\centering Multi-Dimensional \\ PWL Fitting}}
& \multirow{2}{*}{}
  & $\ell_1$ & 5  & 28 & \textbf{67} & 100 \\
& & $\ell_2$ & 11 & 40 & \textbf{49} & 100 \\

\midrule

\multicolumn{3}{l}{\textbf{Overall Total (All Settings)}}
& 62 & 163 & \textbf{375} & \textbf{600} \\
\bottomrule
\end{tabular}

\end{table}

Tables~\ref{tab:unified_benchmark_summary} and~\ref{tab:unified_total_runtime} provide a unified view of solver performance across all experiment classes and settings. As summarized in Table~\ref{tab:unified_benchmark_summary}, the Extended formulations achieve the fastest convergence in the majority of instances overall (375 out of 600), outperforming both Basic (62/600) and Alternative (163/600). For the single-dimensional PWL fitting experiments with continuity, the Extended column aggregates results from two formulations, namely Extended Basic and Extended Alternative. Under the $\ell_1$ norm, Extended Basic is the fastest formulation in 30 instances, while Extended Alternative is fastest in the remaining 20 instances. Under the $\ell_2$ norm, Extended Basic is fastest in 10 instances, whereas Extended Alternative is fastest in 62 instances. Complementing this instance-wise summary, Table~\ref{tab:unified_total_runtime} shows that the Extended formulations also achieve the lowest cumulative runtime across all settings (171884.21~s, versus 216379.52~s for Alternative and 246307.74~s for Basic). For the single-dimensional PWL fitting experiments with continuity, the Extended column reports the smaller cumulative runtime of the Extended Basic and Extended Alternative formulations for each norm. Specifically, under the $\ell_1$ norm, Extended Basic requires a cumulative runtime of 28709.02~s compared with 41633.72~s for the Basic formulation, while Extended Alternative requires 53708.13~s compared with 62613.81~s for the Alternative formulation. Similarly, under the $\ell_2$ norm, Extended Basic requires 40959.35~s compared with 68502.35~s for the Basic formulation, whereas Extended Alternative requires 22583.86~s compared with 23832.66~s for the Alternative formulation. Thus, both Extended Basic and Extended Alternative consistently outperform their respective benchmark formulations in terms of cumulative runtime for both $\ell_1$ and $\ell_2$ norms. These aggregate summaries are obtained by consolidating the detailed per-instance runtimes and relative optimality gaps reported in Appendix Tables~\ref{tab:runtime_1D_without_continuity} (univariate without continuity), \ref{tab:runtime_1D_with_continuity} (univariate with continuity), and \ref{tab:runtime_MultiD_without_continuity} (multi-dimensional shared change-points). Figure~\ref{fig:PWL_CPD_examples} visually illustrates representative fits from each regime, namely univariate with and without continuity (Figures~\ref{fig:Univariate_with_continuity} - \ref{fig:Univariate_without_continuity}) and multi-dimensional fitting with shared change-points (Figure~\ref{fig:Multi_CPD}).

\begin{table}[H]
\centering
\small
\renewcommand{\arraystretch}{1.25}
\setlength{\tabcolsep}{7pt}

\caption{
Unified comparison of total runtime (in seconds)  across all computational experiments and random seeds. Instances that reached the runtime limit are counted as having runtime 2000 seconds.
}
\label{tab:unified_total_runtime}

\begin{tabular}{l l c c c c}
\toprule
\textbf{Experiment Class} & \textbf{Setting} & \textbf{Norm}
& \textbf{Basic} & \textbf{Alternative} & \textbf{Extended} \\
\midrule

\multirow{4}{*}{\parbox{3.2cm}{\centering Single-Dimensional \\ PWL Fitting}}

& \multirow{2}{*}{Without Continuity}
  & $\ell_1$ & 31065.49 & 30500.25 & \textbf{28767.86}  \\
& & $\ell_2$ & 11074.89 & 7910.59 & \textbf{4878.22} \\

\cmidrule(lr){2-6}

& \multirow{2}{*}{With Continuity}
  & $\ell_1$ & 41633.72 & 62613.81 & \textbf{28709.02} \\
& & $\ell_2$ & 68502.35 & 23832.66 & \textbf{22583.86} \\

\midrule

\multirow{2}{*}{\parbox{3.2cm}{\centering Multi-Dimensional \\ PWL Fitting}}
& \multirow{2}{*}{}
  & $\ell_1$ & 58739.26 & 58630.37 & \textbf{51875.88} \\
& & $\ell_2$ & 35292.03 & \textbf{32891.84} & 35069.37 \\

\midrule

\multicolumn{3}{l}{\textbf{Overall Total (All Settings)}}
& 246307.74 & 216379.52 & \textbf{171884.21} \\
\bottomrule
\end{tabular}

\end{table}

\section{Conclusion}
\label{sec:conclusion}
This work addresses the statistical problem of change-point detection through a unified MIP-based framework for piecewise-linear fitting. The key contribution is a novel set of constraints for partitioning the data
into contiguous segments, where each segment is fit by a linear function. These
constraints exhibit favorable theoretical properties that contribute to faster
solver runtimes. In particular, relaxing the integrality requirements on the
segment-assignment variables yields LP relaxations whose feasible region forms
an integral polytope. We provide theoretical arguments to explain why the proposed segment-assignment constraints lead to improved computational performance and complement these results with comprehensive computational experiments that empirically verify the claimed improvements. Promising avenues for future research include enhancing scalability through decomposition techniques and developing online variants of the proposed change-point detection framework.

\section*{Acknowledgments} The work of Narula and Xie is partially supported by National Science Foundation DMS-2220495.

\bibliographystyle{plainnat}
\bibliography{refs}

@article{shah2026non,
  title={Non-monotonicity of branching rules with respect to linear relaxations},
  author={Shah, Prachi and Dey, Santanu S and Molinaro, Marco},
  journal={INFORMS Journal on Computing},
  volume={38},
  number={1},
  pages={53--66},
  year={2026},
  publisher={INFORMS}
}

@book{nemhauser1988integer,
  title     = {Integer and Combinatorial Optimization},
  author    = {Nemhauser, George L. and Wolsey, Laurence A.},
  year      = {1988},
  publisher = {John Wiley \& Sons},
  address   = {New York},
  series    = {Wiley-Interscience Series in Discrete Mathematics and Optimization},
  isbn      = {978-0-471-82819-8},
  pages     = {763}
}

@book{conforti2014integer,
  title     = {Integer Programming},
  author    = {Conforti, Michele and Cornu{\'e}jols, G{\'e}rard and Zambelli, Giacomo},
  series    = {Graduate Texts in Mathematics},
  volume    = {271},
  year      = {2014},
  publisher = {Springer},
  address   = {Cham},
  doi       = {10.1007/978-3-319-11008-0},
  isbn      = {978-3-319-11007-3}
}

@article{goldberg2021,
  title={MINLP formulations for continuous piecewise linear function fitting},
  author={Goldberg, Noam and Rebennack, Steffen and Kim, Youngdae and Krasko, Vitaliy and Leyffer, Sven},
  journal={Computational Optimization and Applications},
  volume={79},
  number={1},
  pages={223--233},
  year={2021},
  publisher={Springer}
}

@article{rebennack2020,
  title={Piecewise linear function fitting via mixed-integer linear programming},
  author={Rebennack, Steffen and Krasko, Vitaliy},
  journal={INFORMS Journal on Computing},
  volume={32},
  number={2},
  pages={507--530},
  year={2020},
  publisher={INFORMS}
}

@article{kong2020,
  title={On the derivation of continuous piecewise linear approximating functions},
  author={Kong, Lingxun and Maravelias, Christos T},
  journal={INFORMS Journal on Computing},
  volume={32},
  number={3},
  pages={531--546},
  year={2020},
  publisher={INFORMS}
}

@article{dehning2020inferring,
  title={Inferring change points in the spread of COVID-19 reveals the effectiveness of interventions},
  author={Dehning, Jonas and Zierenberg, Johannes and Spitzner, F Paul and Wibral, Michael and Neto, Joao Pinheiro and Wilczek, Michael and Priesemann, Viola},
  journal={Science},
  volume={369},
  number={6500},
  pages={eabb9789},
  year={2020},
  publisher={American Association for the Advancement of Science}
}

@article{tobiasz2023multivariate,
  title={Multivariate piecewise linear regression model to predict radiosensitivity using the association with the genome-wide copy number variation},
  author={Tobiasz, Joanna and Al-Harbi, Najla and Bin Judia, Sara and Majid Wakil, Salma and Polanska, Joanna and Alsbeih, Ghazi},
  journal={Frontiers in Oncology},
  volume={13},
  pages={1154222},
  year={2023},
  publisher={Frontiers}
}

@inproceedings{buason2025sample,
  title={Sample-based piecewise linear power flow approximations using second-order sensitivities},
  author={Buason, Paprapee and Misra, Sidhant and Molzahn, Daniel K},
  booktitle={2025 IEEE Kiel PowerTech},
  pages={1--7},
  year={2025},
  organization={IEEE}
}

@inproceedings{ahmadi2013piecewise,
  title={Piecewise linear approximation of generators cost functions using max-affine functions},
  author={Ahmadi, Hamed and Mart{\'\i}, Jos{\'e} R and Moshref, Ali},
  booktitle={2013 IEEE Power \& Energy Society General Meeting},
  pages={1--5},
  year={2013},
  organization={IEEE}
}

@article{gao2018pvc,
  title={Piecewise linear approximation based MILP method for PVC plant planning optimization},
  author={Gao, Xiaoyong and Feng, Zhenhui and Wang, Yuhong and Huang, Xiaolin and Huang, Dexian and Chen, Tao and Lian, Xue},
  journal={Industrial \& Engineering Chemistry Research},
  volume={57},
  number={4},
  pages={1233--1244},
  year={2018},
  publisher={ACS Publications}
}

@article{wagner2002segmented,
  title={Segmented regression analysis of interrupted time series studies in medication use research},
  author={Wagner, Anita K and Soumerai, Stephen B and Zhang, Fang and Ross-Degnan, Dennis},
  journal={Journal of clinical pharmacy and therapeutics},
  volume={27},
  number={4},
  pages={299--309},
  year={2002},
  publisher={Wiley Online Library}
}

@article{bertsimas2007,
  title={Classification and regression via integer optimization},
  author={Bertsimas, Dimitris and Shioda, Romy},
  journal={Operations research},
  volume={55},
  number={2},
  pages={252--271},
  year={2007},
  publisher={Informs}
}

@article{toriello2012fitting,
  title={Fitting piecewise linear continuous functions},
  author={Toriello, Alejandro and Vielma, Juan Pablo},
  journal={European Journal of Operational Research},
  volume={219},
  number={1},
  pages={86--95},
  year={2012},
  publisher={Elsevier}
}

@article{warwicker2022comparison,
  title={A comparison of two mixed-integer linear programs for piecewise linear function fitting},
  author={Warwicker, John Alasdair and Rebennack, Steffen},
  journal={INFORMS Journal on Computing},
  volume={34},
  number={2},
  pages={1042--1047},
  year={2022},
  publisher={INFORMS}
}

@article{cui2018composite,
  title={Composite difference-max programs for modern statistical estimation problems},
  author={Cui, Ying and Pang, Jong-Shi and Sen, Bodhisattva},
  journal={SIAM Journal on Optimization},
  volume={28},
  number={4},
  pages={3344--3374},
  year={2018},
  publisher={SIAM}
}

@article{prokhorov2025change,
  title={Change-point detection in time series using mixed integer programming},
  author={Prokhorov, Artem and Radchenko, Peter and Semenov, Alexander and Skrobotov, Anton},
  journal={Journal of Business \& Economic Statistics},
  pages={1--9},
  year={2025},
  publisher={Taylor \& Francis}
}

@article{harchaoui2010multiple,
  title={Multiple change-point estimation with a total variation penalty},
  author={Harchaoui, Za{\i}d and L{\'e}vy-Leduc, C{\'e}line},
  journal={Journal of the American Statistical Association},
  volume={105},
  number={492},
  pages={1480--1493},
  year={2010},
  publisher={Taylor \& Francis}
}

@article{warwicker2023unified,
  title={A unified framework for bivariate clustering and regression problems via mixed-integer linear programming},
  author={Warwicker, John Alasdair and Rebennack, Steffen},
  journal={Discrete Applied Mathematics},
  volume={336},
  pages={15--36},
  year={2023},
  publisher={Elsevier}
}

@article{ploussard2024piecewise,
  title={Piecewise linear approximation with minimum number of linear segments and minimum error: A fast approach to tighten and warm start the hierarchical mixed integer formulation},
  author={Ploussard, Quentin},
  journal={European Journal of Operational Research},
  volume={315},
  number={1},
  pages={50--62},
  year={2024},
  publisher={Elsevier}
}

@article{zhang2007modified,
  title={A modified Bayes information criterion with applications to the analysis of comparative genomic hybridization data},
  author={Zhang, Nancy R and Siegmund, David O},
  journal={Biometrics},
  volume={63},
  number={1},
  pages={22--32},
  year={2007},
  publisher={Oxford University Press}
}

@article{cao2018multi,
  title={Multi-sensor slope change detection},
  author={Cao, Yang and Xie, Yao and Gebraeel, Nagi},
  journal={Annals of Operations Research},
  volume={263},
  number={1},
  pages={163--189},
  year={2018},
  publisher={Springer}
}

@article{wang2018high,
  title={High dimensional change point estimation via sparse projection},
  author={Wang, Tengyao and Samworth, Richard J},
  journal={Journal of the Royal Statistical Society Series B: Statistical Methodology},
  volume={80},
  number={1},
  pages={57--83},
  year={2018},
  publisher={Oxford University Press}
}

@inproceedings{xie2013sequential,
  title={Sequential multi-sensor change-point detection},
  author={Xie, Yao and Siegmund, David},
  booktitle={2013 Information theory and applications workshop (ITA)},
  pages={1--20},
  year={2013},
  organization={IEEE}
}

@article{bai2003computation,
  title={Computation and analysis of multiple structural change models},
  author={Bai, Jushan and Perron, Pierre},
  journal={Journal of applied econometrics},
  volume={18},
  number={1},
  pages={1--22},
  year={2003},
  publisher={Wiley Online Library}
}

\appendix
\renewcommand{\thesubsection}{A.\arabic{subsection}} 
\renewcommand{\theequation}{A.\arabic{equation}}      

\section*{Appendix}
\addcontentsline{toc}{section}{Appendix}

\section{Choice of Big-\(M\) Constants for PWL Fitting}
\label{sec:bigM}
\numberwithin{equation}{section}

This section derives a data-driven parameter space \(\boldsymbol{\Theta}\) for
\(\boldsymbol{\theta}=(\{m_i,c_i\}_{i=1}^K,\, r_0,\ldots,r_K)\) and derives valid
Big-\(M\) constants for the one-dimensional PWL formulations in
Sections~\ref{subsec:Basic} and~\ref{subsec:Alternate}. Assuming the inputs satisfy \(x_1 < x_2 < \cdots < x_T\), we define the global slope
bounds induced by the data as
\begin{align}
\label{eq:theta_Lm}
L^{m}
&:=
\min_{\substack{t_1,t_2 \in \{1,\ldots,T\} \\ t_1 \neq t_2}}
\frac{y_{t_1}-y_{t_2}}{x_{t_1}-x_{t_2}},\\
\label{eq:theta_Um}
U^{m}
&:=
\max_{\substack{t_1,t_2 \in \{1,\ldots,T\} \\ t_1 \neq t_2}}
\frac{y_{t_1}-y_{t_2}}{x_{t_1}-x_{t_2}} .
\end{align}
These bounds are valid for the slope of any linear segment that fits the data over a sub-interval of the observed domain. Given \(L^m\) and \(U^m\), we define the following intercept bounds such that
each linear function \(m x + c\) can attain observed values over the domain
\(\{x_t\}_{t=1}^T\):
\begin{align}
\label{eq:theta_Lc}
L^{c}
&:=
\min_{t \in \{1,\ldots,T\}}
\min\bigl\{ y_t - L^{m}x_t,\; y_t - U^{m}x_t \bigr\},\\
\label{eq:theta_Uc}
U^{c}
&:=
\max_{t \in \{1,\ldots,T\}}
\max\bigl\{ y_t - L^{m}x_t,\; y_t - U^{m}x_t \bigr\}.
\end{align}
The breakpoint bounds are obtained based on the
convention that \(r_0=x_1\) and \(r_K=x_T\). We restrict all interior breakpoints to
\([x_1,x_T]\) and impose ordering:
\begin{equation}
\label{eq:theta_breaks}
x_1 = r_0 \leq r_1 \leq \cdots \leq r_{K-1} \leq r_K = x_T,
\qquad r_j \in [x_1,x_T]\ \ \forall j=1,\ldots,K-1.
\end{equation}
We therefore define the parameter space:
\begin{equation}
\label{eq:Theta_def}
\boldsymbol{\Theta}
:=
\Bigl\{
\boldsymbol{\theta}=(\{m_i,c_i\}_{i=1}^K, r_0,\ldots,r_K)\ :\
m_i \in [L^m,U^m],\ c_i \in [L^c,U^c]\ \forall i,\ \text{and \eqref{eq:theta_breaks} holds}
\Bigr\}.
\end{equation}
We use the above bounds to derive valid Big-\(M\) constants for the
value-assignment and breakpoint-localization constraints in
\eqref{eq:A2_upper} - \eqref{eq:A2_lower} and
\eqref{eq:A3_right} - \eqref{eq:A3_left}, respectively. To ensure a fair and
consistent comparison across MIP benchmarks, we adopt the same Big-\(M\)
construction as in \citet{rebennack2020}. For the value-assignment constraints
\eqref{eq:A2_upper} - \eqref{eq:A2_lower}, the constant \(M^A_{1,t}\) must
dominate the fitted value at \(x_t\) whenever \(\delta_{j,t}=0\), and is obtained as:
\begin{equation}
\label{eq:MA1_app}
M^A_{1,t}
=
\max \Bigl\{
|L^{m}x_t| + |L^{c}|,\;
|U^{m}x_t| + |L^{c}|,\;
|L^{m}x_t| + |U^{c}|,\;
|U^{m}x_t| + |U^{c}|
\Bigr\},
\qquad t = 1,\ldots,T.
\end{equation}
For the breakpoint-localization constraints
\eqref{eq:A3_right} - \eqref{eq:A3_left}, note that
\(x_1 \le r_j \le x_T\) for all breakpoints \(r_j\), and
\(x_1 \le x_t \le x_T\) for all \(t\). It is therefore sufficient to choose:
\begin{align}
\label{eq:MA2_app}
M^A_{2,t} &=x_t - x_1 ,\\
\label{eq:MA3_app}
M^A_{3,t} &= x_T - x_t,
\end{align}
which ensures that the corresponding inequalities are automatically
satisfied whenever \(\delta_{j,t}=0\). Finally, the linearized continuity constraints
\eqref{eq:B2_1} - \eqref{eq:B2_4} require a Big-\(M\) constant \(M^B_{4,t}\)
that dominates expressions of the form
\(|x_t(m_j - m_{j+1}) - (c_{j+1} - c_j)|\).
Over \((m_j,m_{j+1}) \in [L^m, U^m]^2\) and
\((c_j,c_{j+1}) \in [L^c, U^c]^2\), we have
\(|m_j - m_{j+1}| \le U^m - L^m\) and
\(|c_{j+1} - c_j| \le U^c - L^c\).
Hence, we obtain:
\begin{equation}
\label{eq:MB4_app}
M^B_{4,t}
=
|x_t|\,(U^{m} - L^{m}) + (U^{c} - L^{c}),
\qquad t = 1,\ldots,T.
\end{equation}

\section{MIP for PWL Fitting with $\ell_0$ Regularization}
\label{sec:L0_regularized}
\numberwithin{equation}{section}

As discussed in Section~\ref{sec:formulation_1d}, the Basic, Extended Basic, and Extended Alternative formulations allow the parameter $K$ to act as an upper bound on the number of linear segments. Consequently, these formulations can be naturally adapted to perform piecewise-linear (PWL) fitting with $\ell_0$ regularization on the number of segments. To enable $\ell_0$ regularization, we explicitly count the number of segments that are activated in the model. This is achieved by introducing an auxiliary binary variable $\delta_j^R$ for each segment $j = 1,\ldots,K$, which indicates whether segment $j$ is used. The following constraint block is added to the formulation:

\begin{enumerate}[label=\textbf{(R\arabic*)}, wide]

\item Segment usage indicator constraints:
\begin{equation}
\label{eq:R1_1}
    \sum_{t=1}^T \delta_{j,t} \leq M^R \delta^R_j,
    \qquad \forall j = 1,\ldots,K.
\end{equation}

\end{enumerate}

Here, $M^R$ is a sufficiently large constant. Since $\delta_{j,t} \in \{0,1\}$ for all $t$, a valid choice is $M^R = T$, as $\sum_{t=1}^T \delta_{j,t} \le T$ holds trivially. Constraint~\eqref{eq:R1_1} enforces $\delta_j^R = 1$ whenever segment $j$ is used by at least one data point, and allows $\delta_j^R = 0$ only when the segment is entirely inactive.

\paragraph{Objective function}

With these additional variables, the objective function is modified to penalize the number of active segments:
\begin{equation}
\min \;\; \sum_{t=1}^T \lvert y_t - \hat{y}_t \rvert^{\,q}
\;+\;
\lambda_0 \sum_{j=1}^K \delta^R_j,
\qquad q \in \{1,2\}.
\label{eq:objective}
\end{equation}

Here, $\lambda_0 > 0$ is a regularization parameter that controls the trade-off between goodness of fit and model complexity, and can be selected based on the dataset or by cross-validation.

\section{LP Relaxation: Feasible Construction Supporting Proposition~\ref{prop:projection_subset_Basic}}
\label{sec:prop_continued}
We show that it is possible to construct a feasible solution that satisfies the
breakpoint localization constraints
\eqref{eq:A3_right}, \eqref{eq:A3_left},
\eqref{eq:A3_endpoints}, and \eqref{eq:A3_breakpoints_ordering}.
All remaining constraints of the Basic formulation are trivially satisfiable. We therefore focus on explicitly constructing the breakpoint
variables $\{r_j\}$ that support a feasible LP-relaxed segment assignment. Consider the instance discussed in Proposition~\ref{prop:projection_subset_Basic} with \(T=8\) data points
\(\{x_t\}_{t=1}^T\) and \(K=4\) segments. The LP-relaxed
segment-assignment variables are as follows:
\[
\widetilde{\delta}_{1,\cdot} = [0.5,\,0.5,\,0.5,\,0.5,\,0.5,\,0.5,\,0.5,\,0.5],
\]
\[
\widetilde{\delta}_{2,\cdot} = [0.4,\,0.1,\,0.4,\,0.1,\,0.4,\,0.1,\,0.4,\,0.1],
\]
\[
\widetilde{\delta}_{3,\cdot} = [0.1,\,0.4,\,0.1,\,0.4,\,0.1,\,0.4,\,0.1,\,0.4],
\]
\[
\widetilde{\delta}_{4,\cdot} = [0,\,0,\,0,\,0,\,0,\,0,\,0,\,0].
\]

The big-$M$ constants in constraints \eqref{eq:A3_right} and \eqref{eq:A3_left}
are given by
\[
M_{2,t}^A = x_t - x_1, \qquad
M_{3,t}^A = x_8 - x_t .
\]

Since this is a $K=4$ segment example, we construct $K+1=5$ breakpoints as follows:
\[
r_0 = x_1, \qquad
r_1 = r_2 = r_3 = \frac{x_1 + x_8}{2}, \qquad
r_4 = x_8 .
\]

We now verify feasibility with respect to constraints
\eqref{eq:A3_right} and \eqref{eq:A3_left} for each segment.

\paragraph{Segment $j=1$.}
For $\widetilde{\delta}_{1,t} = 0.5$ for all $t$, constraint \eqref{eq:A3_right}
requires
\[
x_t - r_1 \le M_{2,t}^A (1 - \widetilde{\delta}_{1,t})
= \frac{x_t - x_1}{2}.
\]
This implies
\[
x_t - \frac{x_t - x_1}{2} \le r_1
\quad \Longleftrightarrow \quad
\frac{x_t + x_1}{2} \le r_1,
\]
which is satisfied for all $t$ by the choice
$r_1 = \frac{x_1 + x_8}{2}$.

Constraint \eqref{eq:A3_left} is always satisfied since
\[
r_0 - x_t = x_1 - x_t \le 0 \le M_{3,t}^A (1 - \widetilde{\delta}_{1,t}).
\]

\paragraph{Segments $j=2,3$.}
For $j\in\{2,3\}$, we have $\widetilde{\delta}_{j,t} \in \{0.1,0.4\}$ and
$r_j = r_{j-1} = \frac{x_1 + x_8}{2}$.
Constraint \eqref{eq:A3_right} requires
\[
x_t - r_j
= x_t - \frac{x_1 + x_8}{2}
\le \frac{x_t - x_1}{2}
\le 0.6 (x_t - x_1)
= M_{2,t}^A (1 - \widetilde{\delta}_{j,t}),
\]
which holds since $1 - \widetilde{\delta}_{j,t} \ge 0.6$.

To verify constraint \eqref{eq:A3_left}, we observe that
\[
r_{j-1} - x_t = \frac{x_1 + x_8}{2} - x_t.
\]
We define $s_t := x_t - x_1$ and $L := x_T - x_1$.
Then
\[
r_{j-1} - x_t
= \frac{L}{2} - s_t
\le 0.6 (L - s_t)
= 0.6 (x_T - x_t)
\le M_{3,t}^A (1 - \widetilde{\delta}_{j,t}),
\]
where the inequality $\frac{L}{2} - s_t
\le 0.6 (L - s_t)$ holds since $0.6 (L - s_t) - (\frac{L}{2} - s_t) = 0.1 L + 0.4 s_t \geq 0$, since  $L, s_t \ge 0$. Furthermore, we also have  $1 - \widetilde{\delta}_{j,t} \ge 0.6$.

\paragraph{Segment $j=4$.}
For the final segment, $\widetilde{\delta}_{4,t} = 0$ for all $t$.
Hence, constraints \eqref{eq:A3_right} and \eqref{eq:A3_left} reduce to
\[
x_t - r_4 \le M_{2,t}^A,
\qquad
r_3 - x_t \le M_{3,t}^A,
\]
both of which are trivially satisfied by construction. This completes the feasible LP-relaxed construction supporting
Proposition~\ref{prop:projection_subset_Basic}.

\label{subsec:prop_continued}
\numberwithin{equation}{subsection}

\section{Computational Results for Univariate Piecewise Linear Fitting without Continuity }
\label{sec:OneD_PWL_non_continuous}

\begin{table}[H]
\centering
\scriptsize
\setlength{\tabcolsep}{5pt}
\renewcommand{\arraystretch}{0.5}

\caption{
Comparison of average runtimes (in seconds) for the Basic, Alternative, and Extended formulations for univariate non-continuous PWL fitting, averaged over three random seeds (0, 10000, and 20000). Bold entries mark the fastest formulation, and bracketed values denote the average optimality gap for instances that reached the 2000~s time limit in all three runs.
}
\label{tab:runtime_1D_without_continuity}
\resizebox{\linewidth}{!}{%
\begin{tabular}{c|ccccc|ccccc|ccccc}
\toprule
& \multicolumn{5}{c|}{Basic}
& \multicolumn{5}{c|}{Alternative}
& \multicolumn{5}{c}{Extended} \\
\cmidrule(lr){2-6}\cmidrule(lr){7-11}\cmidrule(lr){12-16}
$T$
& $K=2$ & $K=3$ & $K=4$ & $K=5$ & $K=6$
& $K=2$ & $K=3$ & $K=4$ & $K=5$ & $K=6$
& $K=2$ & $K=3$ & $K=4$ & $K=5$ & $K=6$ \\
\midrule

\multicolumn{16}{c}{\textbf{$\ell_1$ loss}} \\
\midrule
\multicolumn{16}{l}{\textbf{AAPL}} \\
100 & 0.21 & 0.45 & 2.36 & 9.54 & 44.60 & \textbf{0.07} & 0.31 & 1.57 & 7.40 & 43.61 & 0.08 & \textbf{0.27} & \textbf{1.17} & \textbf{6.23} & \textbf{31.14} \\
200 & 0.41 & 1.01 & 7.03 & 58.04 & 548.07 & \textbf{0.17} & 0.80 & \textbf{4.30} & 51.54 & \textbf{528.29} & 0.22 & \textbf{0.67} & 5.78 & \textbf{34.52} & 616.81 \\
300 & 0.78 & 2.22 & 16.34 & 60.58 & \textbf{641.66} & \textbf{0.33} & 2.00 & \textbf{9.80} & 57.53 & 756.24 & 0.35 & \textbf{1.77} & 10.16 & \textbf{45.97} & 747.74 \\
400 & 0.87 & 3.40 & 24.44 & 84.73 & 376.22 & \textbf{0.46} & 3.34 & \textbf{15.03} & 69.83 & 472.83 & 0.56 & \textbf{2.83} & 15.29 & \textbf{51.59} & \textbf{255.81} \\
500 & 2.03 & 5.52 & 43.19 & 160.36 & 799.53 & 0.62 & 4.90 & \textbf{25.98} & 103.47 & \textbf{743.07} & \textbf{0.59} & \textbf{3.95} & 27.77 & \textbf{84.14} & 1234.57 \\

\midrule
\multicolumn{16}{l}{\textbf{AMZN}} \\
100 & 0.18 & 0.29 & 0.91 & 2.69 & 15.17 & \textbf{0.08} & 0.24 & 0.55 & 2.63 & 12.67 & 0.09 & \textbf{0.17} & \textbf{0.50} & \textbf{2.16} & \textbf{8.16} \\
200 & 0.46 & 1.01 & 4.06 & 18.35 & 68.64 & \textbf{0.20} & \textbf{0.71} & \textbf{2.89} & 15.62 & 65.20 & 0.25 & 0.76 & 3.32 & \textbf{11.38} & \textbf{42.54} \\
300 & 0.65 & 2.11 & 20.61 & 109.59 & 485.34 & \textbf{0.27} & 1.48 & 13.18 & 99.77 & \textbf{368.45} & 0.32 & \textbf{1.20} & \textbf{13.09} & \textbf{71.96} & 633.31 \\
400 & 0.83 & 2.97 & 26.16 & 132.86 & 1003.65 & \textbf{0.41} & \textbf{2.37} & 18.88 & 113.63 & 1169.84 & 0.50 & 2.63 & \textbf{17.09} & \textbf{76.77} & \textbf{705.96} \\
500 & 1.23 & 6.87 & 36.90 & 159.57 & 940.47 & \textbf{0.76} & 4.51 & 25.64 & 111.92 & 914.27 & 0.94 & \textbf{3.76} & \textbf{24.54} & \textbf{79.89} & \textbf{812.16} \\

\midrule
\multicolumn{16}{l}{\textbf{GOOGL}} \\
100 & 0.17 & 0.50 & 3.30 & 14.92 & 67.86 & \textbf{0.07} & 0.27 & 1.65 & 11.22 & 63.85 & 0.08 & \textbf{0.25} & \textbf{1.49} & \textbf{10.09} & \textbf{47.05} \\
200 & 0.50 & 0.93 & 6.68 & 34.00 & 209.95 & \textbf{0.17} & \textbf{0.59} & 3.66 & 31.58 & \textbf{182.00} & 0.23 & 0.62 & \textbf{3.49} & \textbf{21.46} & 288.90 \\
300 & 0.82 & 2.03 & 10.65 & 47.78 & \textbf{387.77} & \textbf{0.38} & 1.37 & \textbf{6.49} & 46.49 & 401.53 & 0.43 & \textbf{1.09} & 7.35 & \textbf{33.68} & 503.08 \\
400 & 1.11 & 3.27 & 19.55 & 100.14 & 792.51 & \textbf{0.55} & \textbf{2.23} & 12.50 & 73.91 & 904.57 & 0.68 & 2.77 & \textbf{11.83} & \textbf{58.94} & \textbf{758.00} \\
500 & 2.96 & 4.77 & 33.24 & 132.32 & 979.41 & \textbf{0.82} & \textbf{3.57} & \textbf{14.16} & 106.28 & 964.38 & 1.46 & 4.17 & 15.79 & \textbf{89.27} & \textbf{735.03} \\

\midrule
\multicolumn{16}{l}{\textbf{MSFT}} \\
100 & 0.13 & 0.33 & 0.89 & 4.80 & 15.12 & 0.09 & 0.33 & 0.615 & 2.99 & 14.47 & \textbf{0.08} & \textbf{0.19} & \textbf{0.605} & \textbf{2.71} & \textbf{7.94} \\
200 & 0.40 & 1.18 & 4.42 & 16.87 & 64.59 & \textbf{0.19} & 0.67 & 3.23 & 15.94 & 65.92 & 0.23 & \textbf{0.60} & \textbf{3.03} & \textbf{9.32} & \textbf{42.96} \\
300 & 0.78 & 1.89 & 9.03 & 37.06 & \textbf{134.58} & \textbf{0.27} & \textbf{1.12} & 7.97 & 24.88 & 160.75 & 0.38 & 1.20 & \textbf{5.91} & \textbf{20.55} & 166.90 \\
400 & 1.12 & 3.17 & 19.31 & 84.15 & 349.02 & \textbf{0.34} & 1.82 & 12.34 & 72.91 & 264.26 & 0.80 & \textbf{1.79} & \textbf{11.13} & \textbf{46.77} & \textbf{202.88} \\
500 & 2.01 & 4.86 & 24.63 & 142.25 & \textbf{640.30} & \textbf{0.75} & 3.84 & \textbf{12.74} & 95.67 & 718.71 & 1.17 & \textbf{2.83} & 14.34 & \textbf{71.90} & 682.39 \\

\midrule
\multicolumn{16}{c}{\textbf{$\ell_2$ loss}} \\
\midrule

\multicolumn{16}{l}{\textbf{AAPL}} \\
100 & 0.10 & 0.47 & 1.40 & 4.13 & 17.77 & 0.048 & 0.28 & \textbf{0.76} & 2.84 & 11.92 & \textbf{0.046} & \textbf{0.18} & 0.89 & \textbf{2.68} & \textbf{7.79} \\
200 & 0.51 & 1.26 & 5.35 & 21.36 & 97.85 & 0.17 & 0.49 & \textbf{2.54} & 14.17 & 54.96 & \textbf{0.16} & \textbf{0.34} & 3.73 & \textbf{11.30} & \textbf{45.11} \\
300 & 0.77 & 5.91 & 10.57 & 40.58 & 203.42 & 0.44 & 1.89 & 7.79 & 21.73 & 188.14 & \textbf{0.41} & \textbf{1.66} & \textbf{5.76} & \textbf{16.61} & \textbf{126.76} \\
400 & 1.63 & 7.65 & 20.99 & 35.75 & 144.00 & \textbf{0.42} & 2.46 & 14.66 & 27.14 & 101.07 & 0.54 & \textbf{1.94} & \textbf{11.21} & \textbf{20.62} & \textbf{60.89} \\
500 & 1.45 & 13.18 & 31.91 & 70.50 & 175.84 & \textbf{0.73} & 3.81 & 23.31 & 37.20 & 244.41 & 1.01 & \textbf{3.35} & \textbf{12.21} & \textbf{29.60} & \textbf{153.66} \\

\midrule
\multicolumn{16}{l}{\textbf{AMZN}} \\
100 & 0.12 & 0.37 & 0.74 & 1.57 & 3.36 & \textbf{0.05} & 0.19 & \textbf{0.36} & 0.87 & 2.17 & 0.06 & \textbf{0.14} & 0.39 & \textbf{0.85} & \textbf{1.74} \\
200 & 0.42 & 1.28 & 3.91 & 9.27 & 33.97 & \textbf{0.162} & \textbf{0.80} & \textbf{2.18} & \textbf{5.47} & 19.01 & 0.164 & 0.98 & 2.34 & 6.47 & \textbf{9.78} \\
300 & 0.88 & 3.78 & 13.07 & 37.46 & 100.59 & \textbf{0.32} & \textbf{1.27} & \textbf{5.75} & 38.05 & 65.46 & 0.36 & 1.31 & 7.28 & \textbf{17.61} & \textbf{46.74} \\
400 & 2.85 & 7.32 & 34.62 & 45.92 & 258.19 & 0.44 & 2.52 & 12.08 & 37.07 & 233.32 & \textbf{0.35} & \textbf{1.28} & \textbf{10.57} & \textbf{29.65} & \textbf{100.25} \\
500 & 1.61 & 14.85 & 27.14 & 75.44 & 316.49 & 0.45 & 3.95 & 23.29 & 65.63 & 218.32 & \textbf{0.39} & \textbf{2.48} & \textbf{14.05} & \textbf{33.30} & \textbf{117.59} \\

\midrule
\multicolumn{16}{l}{\textbf{GOOGL}} \\
100 & 0.19 & 0.37 & 1.56 & 5.02 & 14.46 & \textbf{0.05} & 0.25 & 1.01 & 4.85 & 10.21 & 0.06 & \textbf{0.19} & \textbf{0.78} & \textbf{3.87} & \textbf{7.83} \\
200 & 0.24 & 2.50 & 5.21 & 13.60 & 52.28 & 0.084 & 0.86 & 2.47 & 8.33 & 25.18 & \textbf{0.081} & \textbf{0.37} & \textbf{1.71} & \textbf{7.38} & \textbf{13.85} \\
300 & 0.98 & 4.40 & 9.47 & 34.76 & 130.88 & 0.25 & 2.34 & 4.79 & 20.16 & 73.51 & \textbf{0.24} & \textbf{0.84} & \textbf{3.69} & \textbf{11.70} & \textbf{30.15} \\
400 & 1.95 & 5.18 & 14.65 & 48.81 & 172.11 & 0.52 & 5.24 & 14.99 & \textbf{31.53} & 160.02 & \textbf{0.38} & \textbf{1.89} & \textbf{11.29} & 33.80 & \textbf{80.45} \\
500 & 2.31 & 14.12 & 22.45 & 68.87 & 330.60 & 0.57 & \textbf{2.50} & \textbf{10.18} & 42.15 & 226.38 & \textbf{0.55} & 3.63 & 16.92 & \textbf{40.61} & \textbf{161.69} \\

\midrule
\multicolumn{16}{l}{\textbf{MSFT}} \\
100 & 0.10 & 0.30 & 0.60 & 2.73 & 6.04 & 0.05 & 0.130 & 0.45 & \textbf{1.16} & 4.51 & \textbf{0.04} & \textbf{0.125} & \textbf{0.41} & 1.41 & \textbf{3.48} \\
200 & 0.20 & 1.06 & 3.59 & 6.29 & 19.35 & 0.22 & \textbf{0.48} & 1.49 & 5.75 & 13.16 & \textbf{0.12} & 0.51 & \textbf{0.94} & \textbf{4.15} & \textbf{8.26} \\
300 & 1.42 & 3.08 & 9.49 & 21.26 & 61.91 & \textbf{0.26} & \textbf{0.66} & 6.16 & 13.07 & 27.26 & 0.87 & 0.71 & \textbf{5.37} & \textbf{9.62} & \textbf{17.76} \\
400 & 1.09 & 9.35 & 24.89 & 82.91 & 148.32 & 0.87 & 3.06 & 7.10 & 32.18 & 105.60 & \textbf{0.27} & \textbf{1.44} & \textbf{5.42} & \textbf{19.33} & \textbf{39.55} \\
500 & 2.83 & 13.63 & 32.45 & 49.71 & 322.53 & \textbf{0.54} & 12.88 & \textbf{9.03} & 43.44 & 196.35 & 1.29 & \textbf{2.26} & 13.26 & \textbf{40.64} & \textbf{90.30} \\

\bottomrule
\end{tabular}
}

\end{table}

\section{Computational Results for Univariate Piecewise Linear Fitting with Continuity }
\label{sec:OneD_PWL_continuous}

\begin{table}[H]
\centering
\scriptsize
\setlength{\tabcolsep}{5pt}
\renewcommand{\arraystretch}{0.5}

\caption{
Comparison of average runtimes (in seconds) for the Basic, Alternative, Extended Basic, and Extended Alternative formulations for univariate PWL fitting with continuity, averaged over three random seeds (0, 10000, and 20000). Bold entries mark the fastest formulation, and bracketed values denote the average optimality gap for instances that reached the 2000~s time limit in all three runs.
}
\label{tab:runtime_1D_with_continuity}

\resizebox{\linewidth}{!}{%
\begin{tabular}{c|cccc|cccc|cccc|cccc}
\toprule
\multirow{2}{*}{\textbf{$T$}}
& \multicolumn{4}{c|}{\textbf{Basic}}
& \multicolumn{4}{c|}{\textbf{Alternative}}
& \multicolumn{4}{c|}{\textbf{Extended Basic}}
& \multicolumn{4}{c}{\textbf{Extended Alternative}} \\
\cmidrule(lr){2-5} \cmidrule(lr){6-9}
\cmidrule(lr){10-13} \cmidrule(lr){14-17}
& $K{=}2$ & $K{=}3$ & $K{=}4$ & $K{=}5$
& $K{=}2$ & $K{=}3$ & $K{=}4$ & $K{=}5$
& $K{=}2$ & $K{=}3$ & $K{=}4$ & $K{=}5$
& $K{=}2$ & $K{=}3$ & $K{=}4$ & $K{=}5$ \\
\midrule
\multicolumn{17}{c}{\textbf{$\ell_1$ loss}} \\
\midrule

\multicolumn{17}{l}{\textbf{AAPL}} \\
100 & 0.30 & \textbf{1.07} & \textbf{6.23} & 130.65
    & 0.142 & 1.32 & 12.48 & 211.11
    & 0.32 & 1.35 & 6.81 & \textbf{112.66}
    & \textbf{0.141} & 1.30 & 11.25 & 158.75 \\
120 & 0.45 & \textbf{1.95} & 28.36 & 105.24
    & 0.191 & 4.19 & 38.94 & 170.36
    & 0.39 & 2.53 & \textbf{21.34} & \textbf{91.38}
    & \textbf{0.187} & 3.39 & 35.35 & 139.79 \\
140 & 0.44 & \textbf{4.86} & 54.42 & 260.91
    & 0.21 & 5.33 & 60.51 & 429.77
    & 0.37 & 5.48 & \textbf{35.51} & \textbf{196.97}
    & \textbf{0.20} & 7.61 & 58.66 & 314.14 \\
160 & 0.77 & \textbf{1.88} & 334.24 & 1796.27
    & 0.30 & 2.14 & 140.85 & 1335.06
    & 0.58 & 2.00 & \textbf{79.58} & \textbf{664.17}
    & \textbf{0.27} & 2.08 & 132.88 & 1166.65 \\
180 & 0.41 & \textbf{3.74} & 51.75 & [1.13\%]
    & \textbf{0.29} & 5.76 & 95.44 & [4.48\%]
    & 0.76 & 3.83 & \textbf{43.22} & \textbf{1688.45}
    & 0.31 & 4.15 & 90.83 & [2.55\%] \\
\midrule

\multicolumn{17}{l}{\textbf{AMZN}} \\
100 & 0.32 & \textbf{0.98} & \textbf{5.57} & 23.97
    & 0.18 & 1.35 & 14.65 & 88.97
    & 0.42 & 1.37 & 7.26 & \textbf{22.56}
    & \textbf{0.16} & 1.26 & 12.08 & 72.04 \\
120 & 0.29 & \textbf{1.78} & 15.16 & 38.46
    & 0.24 & 2.75 & 44.25 & 143.72
    & 0.58 & 1.84 & \textbf{14.56} & \textbf{31.47}
    & \textbf{0.23} & 2.70 & 32.85 & 129.49 \\
140 & 0.56 & \textbf{2.16} & 41.45 & \textbf{105.66}
    & 0.28 & 3.34 & 79.38 & 909.63
    & 0.41 & 2.42 & \textbf{23.03} & 131.10
    & \textbf{0.25} & 4.22 & 79.18 & 624.09 \\
160 & 0.48 & \textbf{4.16} & 247.68 & 823.29
    & 0.37 & 5.72 & 125.73 & 1825.19
    & 0.70 & 4.68 & \textbf{59.02} & \textbf{719.11}
    & \textbf{0.32} & 4.31 & 131.58 & 1673.22 \\
180 & 0.62 & 28.60 & 113.24 & 1146.75
    & 0.51 & \textbf{9.05} & 92.12 & 1803.92
    & 0.99 & 21.77 & \textbf{47.50} & \textbf{551.23}
    & \textbf{0.48} & 11.12 & 101.55 & 1155.67 \\
\midrule

\multicolumn{17}{l}{\textbf{GOOGL}} \\
100 & 0.28 & \textbf{1.69} & \textbf{15.89} & 165.22
    & 0.16 & 2.06 & 34.62 & 318.88
    & 0.31 & 1.77 & 18.08 & \textbf{157.83}
    & \textbf{0.15} & 1.80 & 30.10 & 278.66 \\
120 & 0.32 & \textbf{1.79} & \textbf{19.47} & 240.13
    & \textbf{0.207} & 2.36 & 43.90 & 468.89
    & 0.29 & 2.38 & 22.87 & \textbf{209.49}
    & 0.209 & 2.12 & 34.81 & 405.41 \\
140 & 0.63 & \textbf{0.93} & \textbf{15.24} & 168.82
    & 0.27 & 1.44 & 37.57 & 494.23
    & 0.53 & 1.62 & 18.46 & \textbf{160.38}
    & \textbf{0.24} & 1.28 & 31.16 & 434.24 \\
160 & 0.59 & \textbf{2.40} & \textbf{13.21} & \textbf{127.05}
    & \textbf{0.30} & 2.61 & 32.22 & 398.68
    & 0.86 & 2.81 & 14.96 & 149.04
    & 0.31 & 2.59 & 30.63 & 365.86 \\
180 & 0.66 & \textbf{1.53} & \textbf{17.82} & 687.04
    & \textbf{0.41} & 2.43 & 36.84 & 931.72
    & 0.64 & 2.61 & 18.26 & \textbf{408.79}
    & 0.42 & 1.86 & 43.18 & 590.02 \\
\midrule

\multicolumn{17}{l}{\textbf{JNJ}} \\
100 & 0.28 & \textbf{2.20} & \textbf{16.11} & 302.37
    & \textbf{0.16} & 3.14 & 29.00 & 484.44
    & 0.35 & 2.52 & 17.49 & 404.73
    & 0.17 & 2.29 & 25.03 & \textbf{301.57} \\
120 & 0.39 & 4.04 & 51.87 & \textbf{146.45}
    & 0.22 & 6.12 & 79.35 & 355.58
    & 0.43 & 3.27 & \textbf{42.56} & 162.83
    & \textbf{0.20} & \textbf{3.22} & 66.79 & 249.36 \\
140 & 0.57 & \textbf{9.29} & \textbf{38.66} & \textbf{175.26}
    & 0.27 & 9.90 & 83.60 & 470.97
    & 0.62 & 9.90 & 48.33 & 232.57
    & \textbf{0.22} & 9.55 & 78.96 & 355.60 \\
160 & 0.57 & 8.58 & 137.80 & 638.75
    & 0.35 & 9.77 & 168.47 & 1264.17
    & 0.45 & \textbf{7.53} & \textbf{96.96} & \textbf{467.80}
    & \textbf{0.33} & 8.33 & 169.21 & 1006.57 \\
180 & 0.77 & \textbf{3.03} & \textbf{32.84} & \textbf{1054.38}
    & 0.38 & 5.47 & 101.47 & 1923.38
    & 0.76 & 5.16 & 39.88 & 1122.53
    & \textbf{0.37} & 3.99 & 98.07 & 1900.39 \\
\midrule

\multicolumn{17}{l}{\textbf{MSFT}} \\
100 & 0.29 & \textbf{0.889} & 12.11 & \textbf{99.89}
    & 0.14 & 0.891 & 23.85 & 216.64
    & 0.30 & 1.55 & \textbf{11.10} & 101.48
    & \textbf{0.13} & 0.91 & 18.43 & 159.53 \\
120 & 0.50 & \textbf{0.96} & \textbf{11.10} & \textbf{95.33}
    & 0.24 & 1.34 & 27.23 & 268.36
    & 0.36 & 1.39 & 11.26 & 111.52
    & \textbf{0.22} & 1.07 & 23.73 & 193.20 \\
140 & 0.52 & \textbf{1.12} & 27.35 & 453.92
    & 0.36 & 1.51 & 46.19 & 671.58
    & 0.65 & 1.74 & \textbf{18.72} & \textbf{204.85}
    & \textbf{0.34} & 1.60 & 28.62 & 543.30 \\
160 & 0.82 & \textbf{1.75} & \textbf{14.47} & 181.86
    & \textbf{0.37} & 4.56 & 36.34 & 592.57
    & 0.73 & 2.04 & 16.64 & \textbf{176.32}
    & 0.42 & 2.73 & 46.37 & 636.00 \\
180 & 0.75 & \textbf{1.86} & \textbf{33.22} & 1449.20
    & \textbf{0.50} & 4.14 & 147.86 & 1354.86
    & 0.78 & 2.46 & 37.61 & \textbf{409.85}
    & 0.53 & 2.12 & 122.80 & 1420.65 \\

\midrule
\multicolumn{17}{c}{\textbf{$\ell_2$ loss}} \\
\midrule

\multicolumn{17}{l}{\textbf{AAPL}} \\
100 & 0.27 & 2.60 & 22.47 & 161.99
    & \textbf{0.128} & 1.15 & 5.71 & 82.62
    & 0.36 & 1.44 & 8.66 & 90.82
    & 0.133 & \textbf{1.00} & \textbf{3.23} & \textbf{77.46} \\
120 & 0.38 & 4.44 & 15.95 & 190.03
    & 0.154 & 1.75 & 11.27 & 75.99
    & 0.35 & 4.11 & 27.83 & 93.04
    & \textbf{0.151} & \textbf{1.23} & \textbf{10.13} & \textbf{51.80} \\
140 & 0.54 & 8.72 & 100.38 & 262.02
    & \textbf{0.14} & 3.11 & \textbf{69.98} & 228.97
    & 0.52 & 9.77 & 342.95 & 333.51
    & 0.15 & \textbf{2.71} & 77.84 & \textbf{209.79} \\
160 & 0.58 & 8.39 & 195.12 & 1199.45
    & 0.33 & 1.93 & 75.16 & 800.37
    & 0.77 & 4.43 & 109.76 & 1625.31
    & \textbf{0.32} & \textbf{1.52} & \textbf{62.14} & \textbf{734.84} \\
180 & 0.73 & 37.92 & 61.52 & 1107.10
    & 0.24 & 3.95 & 19.05 & 532.60
    & 0.83 & 8.23 & 20.71 & 1247.54
    & \textbf{0.23} & \textbf{3.70} & \textbf{16.74} & \textbf{328.27} \\
\midrule

\multicolumn{17}{l}{\textbf{AMZN}} \\
100 & 0.26 & 3.64 & 6.64 & 8.52
    & 0.15 & 1.26 & 10.27 & 17.64
    & 0.38 & 1.73 & \textbf{5.88} & \textbf{6.52}
    & \textbf{0.13} & \textbf{1.21} & 6.39 & 9.81 \\
120 & 0.37 & 2.93 & 172.44 & 13.87
    & 0.194 & \textbf{1.44} & 21.04 & 27.84
    & 0.54 & 2.86 & 12.13 & \textbf{10.30}
    & \textbf{0.187} & 3.60 & \textbf{9.70} & 19.16 \\
140 & 0.43 & 4.08 & 38.48 & 657.72
    & \textbf{0.17} & \textbf{2.84} & 44.96 & 266.12
    & 0.50 & 3.92 & \textbf{18.70} & \textbf{104.99}
    & 0.18 & 6.10 & 35.28 & 111.92 \\
160 & 0.74 & 14.47 & 861.91 & 488.56
    & \textbf{0.289} & 3.54 & 52.49 & 351.14
    & 0.93 & 7.72 & 66.42 & 479.91
    & 0.293 & \textbf{3.09} & \textbf{51.18} & \textbf{230.56} \\
180 & 0.85 & 25.57 & 735.42 & 1582.37
    & 0.44 & \textbf{8.94} & 53.06 & \textbf{483.64}
    & 1.28 & 12.47 & 248.82 & 1124.47
    & \textbf{0.43} & 8.97 & \textbf{49.11} & 641.01 \\
\midrule

\multicolumn{17}{l}{\textbf{GOOGL}} \\
100 & 0.30 & 3.42 & 44.61 & 768.49
    & 0.14 & 1.46 & 31.12 & 231.75
    & 0.35 & 3.13 & 40.20 & 464.61
    & \textbf{0.13} & \textbf{0.91} & \textbf{19.42} & \textbf{189.18} \\
120 & 0.31 & 7.44 & 84.70 & 1251.68
    & 0.19 & 1.78 & 27.71 & 318.52
    & 0.38 & 5.80 & 52.89 & 632.04
    & \textbf{0.15} & \textbf{1.49} & \textbf{26.08} & \textbf{248.68} \\
140 & 0.34 & 1.69 & 12.45 & 188.17
    & 0.34 & 1.21 & \textbf{7.32} & 156.89
    & \textbf{0.31} & 1.48 & 7.81 & \textbf{36.78}
    & 0.38 & \textbf{0.81} & 9.54 & 138.48 \\
160 & 0.49 & 1.78 & 15.19 & 411.29
    & 0.305 & 1.32 & \textbf{6.39} & 266.81
    & 0.64 & 1.61 & 10.62 & \textbf{65.75}
    & \textbf{0.302} & \textbf{0.90} & 10.17 & 191.90 \\
180 & 1.40 & 1.69 & 33.33 & [0.06\%]
    & 0.395 & 2.15 & 11.01 & \textbf{84.86}
    & 1.13 & 2.25 & 13.32 & 217.78
    & \textbf{0.391} & \textbf{0.73} & \textbf{10.76} & 144.43 \\
\midrule

\multicolumn{17}{l}{\textbf{JNJ}} \\
100 & 0.27 & 8.57 & 53.53 & 287.90
    & \textbf{0.146} & 1.40 & 25.14 & \textbf{155.16}
    & 0.40 & 4.53 & 44.05 & 175.83
    & 0.154 & \textbf{1.24} & \textbf{20.95} & 166.25 \\
120 & 0.39 & 6.62 & 148.51 & 1100.91
    & \textbf{0.172} & 1.84 & 68.98 & 214.29
    & 0.64 & 5.07 & 87.64 & \textbf{154.92}
    & 0.174 & \textbf{1.52} & \textbf{51.94} & 200.63 \\
140 & 0.54 & 9.90 & 262.80 & 607.15
    & 0.224 & \textbf{4.46} & 52.01 & \textbf{263.09}
    & 0.55 & 12.73 & 85.49 & 298.88
    & \textbf{0.222} & 4.66 & \textbf{44.77} & 417.90 \\
160 & 0.73 & 11.93 & 717.23 & 1949.38
    & 0.220 & \textbf{4.51} & 81.35 & \textbf{369.85}
    & 0.69 & 10.44 & 143.45 & 1030.99
    & \textbf{0.217} & 5.39 & \textbf{77.18} & 466.71 \\
180 & 1.20 & 10.27 & 309.29 & 988.06
    & 0.229 & \textbf{2.43} & 27.32 & 1024.29
    & 0.61 & 7.72 & \textbf{24.95} & 1319.42
    & \textbf{0.228} & 2.71 & 29.08 & \textbf{450.44} \\
\midrule

\multicolumn{17}{l}{\textbf{MSFT}} \\
100 & 0.45 & 1.34 & 12.34 & 238.72
    & \textbf{0.13} & \textbf{0.76} & 9.74 & 67.96
    & 0.48 & 1.98 & 15.97 & 140.18
    & 0.14 & 0.77 & \textbf{3.79} & \textbf{24.11} \\
120 & 0.39 & 3.76 & 10.61 & 494.53
    & \textbf{0.19} & \textbf{0.54} & 8.73 & 67.36
    & 0.55 & 1.65 & 10.76 & 138.72
    & 0.20 & 0.63 & \textbf{6.22} & \textbf{42.53} \\
140 & 0.72 & 3.65 & 20.34 & 408.75
    & 0.28 & \textbf{0.92} & 19.13 & 140.22
    & 0.70 & 1.86 & 142.44 & 743.85
    & \textbf{0.27} & 1.43 & \textbf{7.15} & \textbf{56.94} \\
160 & 0.81 & 2.48 & 106.90 & 738.98
    & \textbf{0.307} & 3.40 & \textbf{27.60} & 175.99
    & 0.84 & 2.74 & 59.75 & 241.66
    & 0.314 & \textbf{0.83} & 29.92 & \textbf{154.74} \\
180 & 1.19 & 5.20 & 55.13 & 1424.02
    & 0.58 & 2.41 & 48.27 & \textbf{658.91}
    & 1.39 & 4.96 & 70.93 & 1062.43
    & \textbf{0.57} & \textbf{1.46} & \textbf{41.70} & 1445.35 \\

\bottomrule
\end{tabular}%
}

\end{table}

\section{Computational Results for Multi-dimensional Multiple Change-point Detection}
\label{sec:MultiD_CPD}

\begin{table}[H]
\centering
\scriptsize
\setlength{\tabcolsep}{5pt}
\renewcommand{\arraystretch}{0.5}

\caption{
Comparison of average runtimes (in seconds) for the Basic, Alternative, and Extended formulations for multi-dimensional change-point detection, averaged over three random seeds (0, 10000, and 20000). Bold entries mark the fastest formulation, and bracketed values denote the average optimality gap for instances that reached the 2000~s time limit in all three runs.
}
\label{tab:runtime_MultiD_without_continuity}
\resizebox{\linewidth}{!}{%
\begin{tabular}{c|ccccc|ccccc|ccccc}
\toprule
\multirow{2}{*}{\textbf{$T$}}
& \multicolumn{5}{c|}{\textbf{Basic}}
& \multicolumn{5}{c|}{\textbf{Alternative}}
& \multicolumn{5}{c}{\textbf{Extended}} \\
\cmidrule(lr){2-6} \cmidrule(lr){7-11} \cmidrule(lr){12-16}
& $D{=}2$ & $D{=}4$ & $D{=}6$ & $D{=}8$ & $D{=}10$
& $D{=}2$ & $D{=}4$ & $D{=}6$ & $D{=}8$ & $D{=}10$
& $D{=}2$ & $D{=}4$ & $D{=}6$ & $D{=}8$ & $D{=}10$ \\
\midrule
\multicolumn{16}{c}{\textbf{$\ell_1$ loss}} \\
\midrule
\multicolumn{16}{l}{\textbf{$K=2$}} \\
100 & 0.32 & 0.42 & 0.84 & 0.66 & 1.06 & \textbf{0.127} & \textbf{0.26} & \textbf{0.43} & \textbf{0.54} & \textbf{0.71} & 0.133 & 0.30 & 0.47 & 0.62 & 0.77 \\
200 & 0.75 & 1.28 & 1.80 & 2.31 & 2.35 & 0.46 & 0.86 & \textbf{0.94} & 1.41 & \textbf{1.51} & \textbf{0.34} & \textbf{0.76} & 1.10 & \textbf{1.35} & 1.68 \\
300 & 1.04 & 1.88 & 2.88 & 3.11 & 6.32 & 0.70 & 1.18 & \textbf{1.72} & \textbf{2.39} & \textbf{3.31} & \textbf{0.63} & \textbf{1.00} & 2.68 & 3.40 & 3.33 \\
400 & 1.28 & 3.46 & 3.77 & 7.17 & 8.28 & \textbf{1.05} & 1.95 & 3.70 & \textbf{5.32} & \textbf{5.42} & 1.42 & \textbf{1.63} & \textbf{3.53} & 5.93 & 5.70 \\
500 & 2.08 & 5.05 & 7.06 & 12.85 & 17.06 & 1.09 & \textbf{2.88} & 5.95 & \textbf{7.70} & \textbf{9.29} & \textbf{1.02} & 3.15 & \textbf{5.64} & 9.59 & 11.82 \\
\midrule
\multicolumn{16}{l}{\textbf{$K=3$}} \\
100 & 0.61 & 1.00 & 1.93 & 2.59 & 3.47 & \textbf{0.39} & 0.99 & 1.46 & 2.18 & \textbf{2.91} & 0.45 & \textbf{0.79} & \textbf{1.36} & \textbf{2.10} & 3.01 \\
200 & 1.77 & 3.82 & 4.96 & 6.51 & \textbf{9.20} & 1.43 & 3.34 & 4.20 & 5.64 & 9.33 & \textbf{1.37} & \textbf{2.86} & \textbf{3.93} & \textbf{5.25} & 9.35 \\
300 & 3.08 & 7.06 & \textbf{11.18} & 14.77 & 20.48 & 2.88 & \textbf{6.06} & 12.01 & 12.56 & \textbf{17.34} & \textbf{2.47} & 6.43 & 14.67 & \textbf{10.15} & 26.82 \\
400 & 8.39 & 13.47 & 23.30 & 25.55 & 31.73 & \textbf{4.01} & \textbf{12.35} & 19.11 & 20.69 & 39.84 & 4.84 & 13.16 & \textbf{18.76} & \textbf{20.18} & \textbf{22.89} \\
500 & 11.77 & 21.04 & 44.61 & 54.40 & 94.15 & 8.54 & \textbf{18.09} & 38.76 & 58.94 & \textbf{74.34} & \textbf{7.96} & 20.44 & \textbf{34.25} & \textbf{47.36} & 75.25 \\
\midrule
\multicolumn{16}{l}{\textbf{$K=4$}} \\
100 & 1.97 & 3.36 & 12.93 & 12.43 & 24.83 & 1.48 & 3.18 & 9.57 & 14.51 & 24.01 & \textbf{1.37} & \textbf{3.08} & \textbf{9.18} & \textbf{11.90} & \textbf{23.41} \\
200 & 12.19 & 24.36 & 34.62 & 53.38 & 59.43 & 10.19 & 20.62 & 32.13 & 43.80 & 56.04 & \textbf{8.19} & \textbf{17.07} & \textbf{26.44} & \textbf{38.52} & \textbf{53.75} \\
300 & 21.21 & 59.65 & 87.13 & 111.66 & 145.42 & \textbf{18.47} & \textbf{43.69} & 73.24 & 121.48 & 166.40 & 18.62 & 49.85 & \textbf{63.37} & \textbf{99.51} & \textbf{135.73} \\
400 & 43.09 & 93.00 & 177.13 & 224.56 & 290.49 & 35.27 & 76.54 & 147.84 & 183.71 & 387.74 & \textbf{33.65} & \textbf{68.79} & \textbf{136.67} & \textbf{159.59} & \textbf{289.92} \\
500 & 68.40 & 182.21 & 380.61 & 273.11 & 943.27 & 40.58 & 141.87 & 386.88 & 272.49 & \textbf{348.27} & \textbf{36.92} & \textbf{136.98} & \textbf{316.29} & \textbf{233.58} & 365.28 \\
\midrule
\multicolumn{16}{l}{\textbf{$K=5$}} \\
100 & 8.46 & \textbf{23.57} & 71.18 & 84.39 & \textbf{178.05} & 7.25 & 23.69 & 75.07 & 106.36 & 207.97 & \textbf{5.99} & 23.88 & \textbf{59.56} & \textbf{78.70} & 181.95 \\
200 & 43.69 & 73.02 & 164.74 & 244.66 & \textbf{312.46} & 38.82 & 74.15 & 160.91 & 256.74 & 403.08 & \textbf{24.26} & \textbf{61.08} & \textbf{123.08} & \textbf{195.60} & 313.12 \\
300 & 107.71 & 306.62 & 370.04 & 512.63 & 775.36 & 107.85 & 297.65 & 383.43 & 597.56 & 1318.19 & \textbf{97.27} & \textbf{265.70} & \textbf{321.02} & \textbf{424.71} & \textbf{746.50} \\
400 & 132.11 & 436.38 & 1125.55 & 1690.91 & [7.14\%] & 132.00 & 459.78 & \textbf{999.04} & 1839.47 & [12.23\%] & \textbf{109.27} & \textbf{376.67} & 1022.17 & \textbf{1406.80} & \textbf{[5.84\%]} \\
500 & 220.75 & 841.12 & [8.79\%] & [6.57\%] & [9.08\%] & 212.20 & 743.94 & [5.54\%] & [11.35\%] & [10.72\%] & \textbf{167.31} & \textbf{666.43} & \textbf{1977.82} & \textbf{1985.97} & \textbf{1895.33} \\
\midrule
\multicolumn{16}{c}{\textbf{$\ell_2$ loss}} \\
\midrule

\multicolumn{16}{l}{\textbf{$K=2$}} \\
100 & 0.17 & 0.35 & 0.52 & \textbf{0.55} & 0.82 & 0.16 & 0.26 & \textbf{0.36} & 0.67 & 0.78 & \textbf{0.11} & \textbf{0.21} & 0.42 & 0.58 & \textbf{0.75} \\
200 & 0.53 & 1.30 & 1.93 & 4.73 & 5.78 & 0.27 & 0.95 & \textbf{1.33} & \textbf{1.78} & 3.75 & \textbf{0.26} & \textbf{0.80} & 1.36 & 2.16 & \textbf{2.66} \\
300 & 1.91 & 4.73 & 10.57 & 12.21 & 25.62 & 2.26 & 2.14 & \textbf{2.85} & \textbf{4.44} & \textbf{4.14} & \textbf{0.79} & \textbf{1.53} & 3.08 & 5.54 & 6.01 \\
400 & 4.96 & 8.96 & 9.91 & 7.29 & 15.71 & \textbf{0.84} & \textbf{2.81} & \textbf{5.81} & \textbf{6.28} & 10.63 & 1.24 & 4.11 & 7.11 & 20.84 & \textbf{9.15} \\
500 & 8.23 & 11.27 & 33.28 & 48.92 & 15.12 & 1.92 & 6.12 & 11.43 & \textbf{12.40} & 15.90 & \textbf{1.49} & \textbf{5.53} & \textbf{8.23} & 12.63 & \textbf{14.82} \\
\midrule

\multicolumn{16}{l}{\textbf{$K=3$}} \\
100 & 0.56 & 1.27 & 2.05 & 2.38 & 2.96 & 0.32 & 0.92 & 1.60 & \textbf{2.20} & \textbf{2.30} & \textbf{0.28} & \textbf{0.73} & \textbf{1.19} & 2.97 & 3.08 \\
200 & 1.72 & 4.81 & 8.62 & 13.33 & 12.96 & \textbf{0.98} & \textbf{3.06} & 8.91 & 15.85 & \textbf{9.98} & 2.12 & 4.97 & \textbf{6.69} & \textbf{9.90} & 12.52 \\
300 & 6.34 & 17.01 & 27.22 & 25.24 & 31.27 & \textbf{2.44} & \textbf{7.64} & \textbf{10.41} & \textbf{14.79} & \textbf{20.65} & 2.78 & 9.97 & 12.78 & 16.46 & 23.82 \\
400 & 16.29 & 60.06 & 56.94 & 56.84 & 47.15 & \textbf{6.19} & 29.87 & \textbf{22.23} & \textbf{32.57} & 67.64 & 9.44 & \textbf{12.14} & 29.38 & 38.00 & \textbf{36.58} \\
500 & 43.11 & 33.12 & 116.81 & 118.75 & 170.05 & 11.63 & \textbf{19.37} & 57.30 & 71.00 & \textbf{104.29} & \textbf{6.55} & 28.09 & \textbf{45.33} & \textbf{61.92} & 105.11 \\
\midrule

\multicolumn{16}{l}{\textbf{$K=4$}} \\
100 & 1.45 & 2.41 & 7.51 & 6.42 & 12.33 & \textbf{0.72} & 1.79 & \textbf{4.45} & 7.56 & \textbf{12.22} & 0.74 & \textbf{1.78} & 4.82 & \textbf{5.94} & 12.55 \\
200 & 6.51 & 15.61 & 26.90 & 37.81 & 61.64 & 6.55 & 18.11 & \textbf{21.71} & \textbf{30.45} & \textbf{41.45} & \textbf{2.40} & \textbf{7.49} & 22.64 & 32.78 & 45.38 \\
300 & 17.36 & 52.25 & 89.15 & 107.34 & 116.63 & \textbf{10.42} & 43.28 & 84.61 & 110.78 & 114.13 & 16.64 & \textbf{31.35} & \textbf{75.83} & \textbf{59.60} & \textbf{103.09} \\
400 & 47.97 & 82.21 & \textbf{129.21} & \textbf{134.86} & \textbf{199.43} & 33.13 & 62.74 & 145.38 & 180.83 & 267.45 & \textbf{23.67} & \textbf{53.20} & 134.96 & 178.73 & 301.15 \\
500 & \textbf{39.43} & 149.63 & \textbf{162.11} & 292.20 & 945.36 & 57.98 & \textbf{111.22} & 218.01 & \textbf{202.94} & 301.30 & 41.80 & 114.92 & 309.12 & 322.91 & \textbf{245.40} \\
\midrule

\multicolumn{16}{l}{\textbf{$K=5$}} \\
100 & 4.05 & 10.43 & 19.88 & \textbf{26.67} & \textbf{56.58} & 3.76 & 6.43 & 18.93 & 27.86 & 65.64 & \textbf{2.57} & \textbf{6.03} & \textbf{18.61} & 28.57 & 61.69 \\
200 & 21.36 & 27.44 & 59.87 & 75.27 & 141.25 & 12.54 & 22.55 & 63.98 & 78.97 & 164.19 & \textbf{9.98} & \textbf{21.19} & \textbf{54.11} & \textbf{70.63} & \textbf{140.31} \\
300 & 47.56 & 119.88 & \textbf{179.46} & 280.13 & 427.74 & 36.57 & 99.12 & 258.05 & 261.51 & \textbf{302.62} & \textbf{33.61} & \textbf{84.77} & 189.13 & \textbf{239.71} & 334.51 \\
400 & 89.21 & 303.21 & 629.56 & 1038.23 & 1189.59 & 52.26 & 186.45 & \textbf{553.74} & \textbf{855.88} & 1103.48 & \textbf{45.98} & \textbf{169.28} & 782.24 & 952.45 & \textbf{969.57} \\
500 & 122.38 & 332.96 & 885.58 & \textbf{818.32} & \textbf{1001.43} & 89.78 & \textbf{299.07} & \textbf{881.26} & 977.05 & 1828.62 & \textbf{81.39} & 352.59 & 1208.90 & 1383.31 & 1705.60 \\
\bottomrule
\end{tabular}%
}
\end{table}

\end{document}